\newtheorem{thm}{Theorem}[section]
\newtheorem{cor}[thm]{Corollary}
\newtheorem{lem}[thm]{Lemma}
\newtheorem{rem}[thm]{Remark}
\theoremstyle{definition}
\newcommand{\scr}[1]{\mathscr #1}
\definecolor{wco}{rgb}{0.5,0.2,0.3}
\numberwithin{equation}{section} \theoremstyle{remark}
\newcommand{\ua}{\uparrow}
\title{{\bf   Well-posedness and Regularity for Distribution Dependent SPDEs with Singular Drifts  }\footnote{Supported in
 part by  NNSFC (11801406,11501286,
11790272).} }
\author{
{\bf     Xing Huang$^{a)}$, Yulin Song$^{b)}$   }\\
\footnotesize{  a)Center for Applied Mathematics, Tianjin University, Tianjin 300072, China}\\
\footnotesize{  xinghuang@tju.edu.cn}\\
\footnotesize{$^{b)}$Department of Mathematics, Nanjing University, Nanjing, 210093, China}\\
\footnotesize{ songyl@amss.ac.cn}}
\begin{document}
\allowdisplaybreaks
\def\R{\mathbb R}  \def\ff{\frac} \def\ss{\sqrt} \def\B{\mathbf
B}
\def\N{\mathbb N} \def\kk{\kappa} \def\m{{\bf m}}
\def\ee{\varepsilon}\def\ddd{D^*}
\def\dd{\delta} \def\DD{\Delta} \def\vv{\varepsilon} \def\rr{\rho}
\def\<{\langle} \def\>{\rangle} \def\GG{\Gamma} \def\gg{\gamma}
  \def\nn{\nabla} \def\pp{\partial} \def\E{\mathbb E}
\def\d{\text{\rm{d}}} \def\bb{\beta} \def\aa{\alpha} \def\D{\scr D}
  \def\si{\sigma} \def\ess{\text{\rm{ess}}}
\def\beg{\begin} \def\beq{\begin{equation}}  \def\F{\scr F}
\def\Ric{\text{\rm{Ric}}} \def\Hess{\text{\rm{Hess}}}
\def\e{\text{\rm{e}}} \def\ua{\underline a} \def\OO{\Omega}  \def\oo{\omega}
 \def\tt{\tilde} \def\Ric{\text{\rm{Ric}}}
\def\cut{\text{\rm{cut}}} \def\P{\mathbb P} \def\ifn{I_n(f^{\bigotimes n})}
\def\C{\scr C}      \def\aaa{\mathbf{r}}     \def\r{r}
\def\gap{\text{\rm{gap}}} \def\prr{\pi_{{\bf m},\varrho}}  \def\r{\mathbf r}
\def\Z{\mathbb Z} \def\vrr{\varrho} \def\ll{\lambda}
\def\L{\scr L}\def\Tt{\tt} \def\TT{\tt}\def\II{\mathbb I}
\def\i{{\rm in}}\def\Sect{{\rm Sect}}  \def\H{\mathbb H}
\def\M{\scr M}\def\Q{\mathbb Q} \def\texto{\text{o}} \def\LL{\Lambda}
\def\Rank{{\rm Rank}} \def\B{\scr B} \def\i{{\rm i}} \def\HR{\hat{\R}^d}
\def\to{\rightarrow}\def\l{\ell}\def\iint{\int}
\def\EE{\scr E}\def\no{\nonumber}
\def\A{\scr A}\def\V{\mathbb V}\def\osc{{\rm osc}}
\def\BB{\scr B}\def\Ent{{\rm Ent}}
\def\U{\scr U}
\def\R{\mathbb R}  \def\ff{\frac} \def\ss{\sqrt} \def\B{\mathbf
B}
\def\N{\mathbb N} \def\kk{\kappa} \def\m{{\bf m}}
\def\ee{\varepsilon}\def\ddd{D^*}
\def\dd{\delta} \def\DD{\Delta} \def\vv{\varepsilon} \def\rr{\rho}
\def\<{\langle} \def\>{\rangle} \def\GG{\Gamma} \def\gg{\gamma}
  \def\nn{\nabla} \def\pp{\partial} \def\E{\mathbb E}
\def\d{\text{\rm{d}}} \def\bb{\beta} \def\aa{\alpha} \def\D{\scr D}
  \def\si{\sigma} \def\ess{\text{\rm{ess}}}
\def\beg{\begin} \def\beq{\begin{equation}}  \def\F{\scr F}
\def\Ric{\text{\rm{Ric}}} \def\Hess{\text{\rm{Hess}}}
\def\e{\text{\rm{e}}} \def\ua{\underline a} \def\OO{\Omega}  \def\oo{\omega}
 \def\tt{\tilde} \def\Ric{\text{\rm{Ric}}}
\def\cut{\text{\rm{cut}}} \def\P{\mathbb P} \def\ifn{I_n(f^{\bigotimes n})}
\def\C{\scr C}   \def\G{\scr G}   \def\aaa{\mathbf{r}}     \def\r{r}
\def\gap{\text{\rm{gap}}} \def\prr{\pi_{{\bf m},\varrho}}  \def\r{\mathbf r}
\def\Z{\mathbb Z} \def\vrr{\varrho} \def\ll{\lambda}
\def\L{\scr L}\def\Tt{\tt} \def\TT{\tt}\def\II{\mathbb I}
\def\i{{\rm in}}\def\Sect{{\rm Sect}}  \def\H{\mathbb H}
\def\M{\scr M}\def\Q{\mathbb Q} \def\texto{\text{o}} \def\LL{\Lambda}
\def\Rank{{\rm Rank}} \def\B{\scr B} \def\i{{\rm i}} \def\HR{\hat{\R}^d}
\def\to{\rightarrow}\def\l{\ell}\def\iint{\int}
\def\EE{\scr E}\def\no{\nonumber}
\def\A{\scr A}\def\V{\mathbb V}\def\osc{{\rm osc}}
\def\BB{\scr B}\def\Ent{{\rm Ent}}\def\3{\triangle}
\def\U{\scr U}\def\8{\infty}\def\1{\lesssim}
\def\R{\mathbb R}  \def\ff{\frac} \def\ss{\sqrt} \def\B{\mathbf
B} \def\W{\mathbb W}
\def\N{\mathbb N} \def\kk{\kappa} \def\m{{\bf m}}
\def\ee{\varepsilon}\def\ddd{D^*}
\def\dd{\delta} \def\DD{\Delta} \def\vv{\varepsilon} \def\rr{\rho}
\def\<{\langle} \def\>{\rangle} \def\GG{\Gamma} \def\gg{\gamma}
  \def\nn{\nabla} \def\pp{\partial} \def\E{\mathbb E}
\def\d{\text{\rm{d}}} \def\bb{\beta} \def\aa{\alpha} \def\D{\scr D}
  \def\si{\sigma} \def\ess{\text{\rm{ess}}}
\def\beg{\begin} \def\beq{\begin{equation}}  \def\F{\scr F}
\def\Ric{\text{\rm{Ric}}} \def\Hess{\text{\rm{Hess}}}
\def\e{\text{\rm{e}}} \def\ua{\underline a} \def\OO{\Omega}  \def\oo{\omega}
 \def\tt{\tilde} \def\Ric{\text{\rm{Ric}}}
\def\cut{\text{\rm{cut}}} \def\P{\mathbb P} \def\ifn{I_n(f^{\bigotimes n})}
\def\C{\scr C}      \def\aaa{\mathbf{r}}     \def\r{r}
\def\gap{\text{\rm{gap}}} \def\prr{\pi_{{\bf m},\varrho}}  \def\r{\mathbf r}
\def\Z{\mathbb Z} \def\vrr{\varrho} \def\ll{\lambda}
\def\L{\scr L}\def\Tt{\tt} \def\TT{\tt}\def\II{\mathbb I}
\def\i{{\rm in}}\def\Sect{{\rm Sect}}  \def\H{\mathbb H}
\def\M{\scr M}\def\Q{\mathbb Q} \def\texto{\text{o}} \def\LL{\Lambda}
\def\Rank{{\rm Rank}} \def\B{\scr B} \def\i{{\rm i}} \def\HR{\hat{\R}^d}
\def\to{\rightarrow}\def\l{\ell}\def\iint{\int}
\def\EE{\scr E}\def\Cut{{\rm Cut}}
\def\A{\scr A} \def\Lip{{\rm Lip}}
\def\BB{\scr B}\def\Ent{{\rm Ent}}\def\L{\scr L}
\def\R{\mathbb R}  \def\ff{\frac} \def\ss{\sqrt} \def\B{\mathbf
B}
\def\N{\mathbb N} \def\kk{\kappa} \def\m{{\bf m}}
\def\dd{\delta} \def\DD{\Delta} \def\vv{\varepsilon} \def\rr{\rho}
\def\<{\langle} \def\>{\rangle} \def\GG{\Gamma} \def\gg{\gamma}
  \def\nn{\nabla} \def\pp{\partial} \def\E{\mathbb E}
\def\d{\text{\rm{d}}} \def\bb{\beta} \def\aa{\alpha} \def\D{\scr D}
  \def\si{\sigma} \def\ess{\text{\rm{ess}}}
\def\beg{\begin} \def\beq{\begin{equation}}  \def\F{\scr F}
\def\Ric{\text{\rm{Ric}}} \def\Hess{\text{\rm{Hess}}}
\def\e{\text{\rm{e}}} \def\ua{\underline a} \def\OO{\Omega}  \def\oo{\omega}
 \def\tt{\tilde} \def\Ric{\text{\rm{Ric}}}
\def\cut{\text{\rm{cut}}} \def\P{\mathbb P} \def\ifn{I_n(f^{\bigotimes n})}
\def\C{\scr C}      \def\aaa{\mathbf{r}}     \def\r{r}
\def\gap{\text{\rm{gap}}} \def\prr{\pi_{{\bf m},\varrho}}  \def\r{\mathbf r}
\def\Z{\mathbb Z} \def\vrr{\varrho} \def\ll{\lambda}
\def\L{\scr L}\def\Tt{\tt} \def\TT{\tt}\def\II{\mathbb I}
\def\i{{\rm in}}\def\Sect{{\rm Sect}}  \def\H{\mathbb H}
\def\M{\scr M}\def\Q{\mathbb Q} \def\texto{\text{o}} \def\LL{\Lambda}
\def\Rank{{\rm Rank}} \def\B{\scr B} \def\i{{\rm i}} \def\HR{\hat{\R}^d}
\def\to{\rightarrow}\def\l{\ell}
\def\8{\infty}\def\I{1}\def\U{\scr U}
\maketitle

\begin{abstract} In this paper, the distribution dependent stochastic differential equation in a separable Hilbert space with a Dini continuous drift is investigated. The existence and uniqueness of weak and strong solutions are obtained. Moreover, some regularity results as well as gradient estimates and log-Harnack inequality are derived for the associated semigroup. In addition, dimensional free Harnack inequality with power and shift Harnack inequality are also proved when the noise is additive.
All of the results extend the ones in the distribution independent situation.
\end{abstract} \noindent
 AMS subject Classification:\  60H155, 60B10.   \\
\noindent
 Keywords: Cylindrical Brownian motion, Relative Entropy, Dini continuous, Distribution dependent, Harnack inequality.
 \vskip 2cm

\section{Introduction}
The distribution dependent stochastic differential equations (SDEs for short), also named McKean-Vlasov SDEs due to pioneering work \cite{M,V}, can be described as the weak limit of
$N$-particle interaction systems formed by $N$ equations forced by independent
Brownian motions. The subject has been extensively explored and it is still under investigation (see \cite{BR1,BR2,CMF,CF,FS,HRW,M,V,FYW1} and references within). When the drifts are singular,
there are a great number of results on the well-posedness , for instance,
\cite{BB0,BB,BBP,CR,HW,MV,RZ} and references therein. In \cite{BB0,BB,BBP}, the existence of weak solutions in the additive noise case is shown by Girsanov's transform together with Schauder's fixed point theorem. However, this method does not work when the diffusion coefficients depend on distribution. The results in \cite{CR} are extended by the first author and his coauthor in \cite{HW}, where the diffusion term is allowed to be distribution dependent. The pathwise uniqueness is proved by utilizing Zvonkin's transform \cite{AZ} in \cite{HW,MV,RZ}, see references therein for distribution independent SDEs. The main idea of Zvonkin's transform is to remove the singular drifts, and it mainly depends on the regularity of a backward Kolmogrov equation with singular coefficients. In the infinite dimensional and distribution independent case, the author in \cite{W} investigates the existence and uniqueness of solutions and log-Harnack inequality for semi-linear stochastic partial differential equations (SPDEs) with Dini continuous drifts by Zvonkin's transform.

The present paper attempts to extend the results in \cite{W} to the distribution dependent case. Meanwhile, dimension-free Harnack inequality with power and shift Harnack inequality are also considered in special situations. In order to obtain the existence of weak solutions under a weak condition, the compactness method [11, chapter 8] as well as Skorohod representation and martingale representation theorem will be employed. It is crucial to construct a family of compact operators to  deal with the stochastic convolution. Moreover, Zvonkin's transform combined with fixed point theorem can be used to investigate the strong well-posedness.

Using the method of coupling by change of measure, the dimension-free Harnack inequality, log-Harnack inequality and shift-Harnack inequality, introduced by F.-Y Wang in \cite{W97}, \cite{RW} and \cite{W14a} respectively, have been established and applied to various SDEs and SPDEs driven by Gaussian noises, see \cite{L,RW10,RW,W14a,Wbook,W07,WZ} and references therein. Different from the finite dimensional case \cite[Theorem 2.5]{HW}, due to the existence of a non-Lipschitzian term $A u$ after Zvonkin's transform  in Lemma \ref{L3.2} below, the coupling by change of measure, for instance in \cite[Chapter 3]{Wbook}, does not work even in the distribution independent case with multiplicative noise. To overcome this difficulty, \cite{W} adopted the gradient-gradient estimate for Markovian semigroups to derive the log-Harnack inequality according to \cite[Chapter 1]{Wbook}. However, this method is unavailable in the distribution dependent case since the solution is not a Markov process. Fortunately, we may employ the existed log-Harnack inequality in \cite{W} and Girsanov's transform to obtain the desired log-Harnack inequality. The main idea is to derive the estimate of the relative entropy between two solutions with different initial distributions. To this end, we rewrite one of the two solutions by Girsanov's transform to be a new one with  the same coefficients with another one, and then the log-Harnack inequality in \cite{W} can be used.
It seems that this method is an effective way to deal with the distribution dependent SDEs and SPDEs.
As for the  Harnack inequality with power and shift Harnack inequality, we adopt coupling by change of measure instead of Zvonkin's transform in the additive noise case.

Let $(\mathbb{H},\langle,\rangle,|\cdot|)$ and $(\mathbb{\bar{H}},\langle,\rangle_{\mathbb{\bar{H}}},|\cdot|_{\mathbb{\bar{H}}})$ be two separable Hilbert spaces, and $W=(W_t)_{t\geq 0}$ be a cylindrical Brownian motion on $\mathbb{\bar{H}}$ with respect to a complete filtered  probability space $(\OO, \F, \{\F_{t}\}_{t\ge 0}, \P)$. More precisely, $W_t=\sum_{n=1}^{\infty}{B^{n}_t\bar{e}_{n}}$ for a sequence of independent one dimensional Brownian motions $\{B^{n}_t\}_{n\geq 1}$ with respect to $(\OO, \F,
\{\F_{t}\}_{t\ge 0}, \P)$ and an orthonormal basis $\{\bar{e}_{n}\}_{n\geq 1}$ on $\mathbb{\bar{H}}$.

Let $\mathscr{P}$ be the
collection of all probability measures on $\H$ equipped with the weak topology. For $\mu\in\scr P$, if
$\mu(|\cdot|^p):=\int_{\H}|x|^p\mu(\d x)<\8$ for some $p\geq1$, we write
$\mu\in\mathscr{P}_p$.  For $p\geq 1$ and $\mu,\bar{\mu}\in\mathscr{P}_p$,
 the $\mathbb{W}_p$-Wasserstein distance between $\mu$ and
$\bar \mu$ is defined by
\begin{equation*}
\mathbb{W}_p(\mu,\bar{\mu})=\inf_{\pi\in\mathcal
{C}(\mu,\bar{\mu})}\Big(\int_{\H\times\H}|x-y|^p\pi(\d x,\d y)\Big)^{\ff{1}{
p}},
\end{equation*}
where $\mathcal {C}(\mu,\bar{\mu})$ stands for the set of all couplings of
$\mu$ and $\bar{\mu}$. For a random variable $\xi,$ its law is
written by $\mathscr{L}_\xi$, and write $\mathscr{L}_\xi|_{\P}$ as the distribution of $\xi$ under $\P$.

Consider the following semi-linear distribution dependent SPDEs on $\mathbb{H}$:
\beq\label{E1} \d X_t= \{A X_t+b_t(X_t, \L_{X_t})\}\d t+Q_t(X_t,\L_{X_t})\d W_t,
\end{equation}
where $(A,\D(A))$ is a negative definite self-adjoint operator on $\mathbb{H}$,
$b: [0,\infty)\times \mathbb{H}\times \scr P\to \mathbb{H}$ is measurable and locally bounded (i.e. bounded on bounded sets), and $Q: [0,\infty)\times \mathbb{H}\times \scr P\to \L(\mathbb{\bar{H}}; \mathbb{H})$ is measurable, where $\L(\mathbb{\bar{H}};\mathbb{H})$ is the space of bounded linear operators from $\mathbb{\bar{H}}$ to $\mathbb{H}$.
Let $\|\cdot\|$ and $\|\cdot\|_{{\rm{HS}}}$ denote the operator norm and the Hilbert-Schmidt norm respectively. 

To characterize the singularity of $b$ with respect to the second variable, set
\beg{equation*}\beg{split}
\D= \Big\{\phi: [0,+\infty)\to [0,+\infty)| \phi^{2} \text{ is concave and }\phi \text{ is increasing with } \int_0^1{\frac{\phi(s)}{s}\d s}<\infty\Big\}.
\end{split}\end{equation*}
Throughout this paper, we assume that there exists an increasing function $K:(0,\infty)\to (0,\infty)$ such that $A$, $b$ and $Q$ satisfy the following conditions.
\beg{enumerate}
\item[{\bf (a1)}]
For some $\varepsilon \in(0,1)$, $(-A)^{\varepsilon-1}$ is of trace class.
That is, $\sum_{n=1}^{\infty}{\lambda_{n}^{\varepsilon-1}}<\infty$ for $0< \lambda_{1}\leq \lambda_{2}\leq\cdots$ being all eigenvalues of $-A$ counting multiplicities with $-A e_i=\lambda_i e_i,i\geq 1$ for an orthonormal basis $\{e_i\}_{i\geq 1}$ of $\H$.

\item[{\bf (a2)}]
The operator $Q:[0,\infty)\times \mathbb{H}\times\scr P\rightarrow\L(\mathbb{\bar{H}}; \mathbb{H}))$ is continuous and for each $t\geq0$ and $\mu\in\scr P,$ and $Q_t(\cdot,\mu)$ is in $C^{2}(\mathbb{H};\L(\mathbb{\bar{H}};\mathbb{H}))$ such that
\beg{equation*}
\beg{split}\label{re0}
\sup_{(t,x,\mu)\in[0,T]\times \mathbb{H}\times\scr P}
\left(\| Q_t(x,\mu)\|+\|\nabla Q_t(x,\mu)\|+\|\nabla^{2} Q_t(x,\mu)\|\right)\leq K(T),\ \ T>0,
\end{split}
\end{equation*}
here $\nabla$ and $\nabla^2$ stand for the first and second ordered gradient operator with respect to the space component respectively.
Meanwhile, $(Q_t Q_t^{\ast})(x,\mu)$ is invertible for each $(t,x,\mu)\in[0,\infty)\times \mathbb{H}\times\scr P$ with
\beg{equation*}
\beg{split}\label{re1}
\sup_{(t,x,\mu)\in[0,T]\times \mathbb{H}\times\scr P}
\|(Q_t Q_t^{\ast})(x,\mu)^{-1}\|\leq K(T),\ \ T>0.
\end{split}
\end{equation*}
Moreover, for any $x\in\mathbb{H}$, $t\geq 0$ and $\mu\in\scr P_2$, it holds
\beq\label{1.3}
\lim_{n\to \infty}\|Q_t(x,\mu)-Q_t(\pi_{n}x,\mu)\|_{{\rm{HS}}}^{2}=0,
\end{equation}
where $\pi_n$ is the orthonormal projection from $\mathbb{H}$ to span$\{e_1, e_2, \cdots, e_n\}$.
In addition, for any $T>0$, it holds
\begin{align}\label{red}
\sup_{(t,x)\in[0,T]\times \mathbb{H}}\|Q_t(x,\mu)-Q_t(x,\nu)\|^2_{{\rm{HS}}}
\leq K(T)\W_2(\mu,\nu)^2,\ \ \mu,\nu\in\scr P_2.
\end{align}
\item[{\bf (a3)}] $\sup_{(x,\mu)\in\H\times \scr P}|b_t(x,\mu)|$ is locally bounded in $t$, and there exists $\phi\in\D$ such that
\beq\label{1.2}
|b_t(x,\mu)-b_t(y,\nu)|\leq \phi(|x-y|)+K(t)\W_{2}(\mu,\nu),\quad t\geq0, x,y\in \mathbb{H}, \mu,\nu \in\scr P_{2}.
\end{equation}
\end{enumerate}
\begin{rem}\label{Din}
 It is well known that $\int_0^1{\frac{\phi(s)}{s}\d s}<\infty$ is the so-called Dini condition. By \eqref{1.2}, for any $t\geq 0$ and $\mu\in\scr P_2$, $b_t(\cdot,\mu)$ is Dini continuous. Take $$
 \phi(0)=0,\ \ \ \phi(s):=\frac{K}{\log^{1+\delta}(c+s^{-1})},\ \ \ s>0
 $$ for constants $K, \delta >0$ and $c$ large enough such that $\phi^{2}$ is concave.
 Then it is routine to check $\phi\in\D$.
\end{rem}
\beg{defn}  A continuous $\F_t$-adapted process $\{X_t\}_{t\geq0}$ is called a mild solution to Equ. \eqref{E1}, if $\mathbb{P}$-a.s
\beg{equation}\label{Mil}
X_t= \e^{At} X_0+\int_{0}^{t} \e^{A(t-s)}b_s(X_s,\L_{X_s})\d s+\int_{0}^{t} \e^{A(t-s)}Q_s(X_s,\L_{X_s})\d W_s, \ \ t\geq0.
\end{equation}
Moreover, if $\E|X_t|^2<\infty$ for any $t\geq 0$, then the solution is said in $\scr P_2$.
Equ. \eqref{E1} is called strongly well-posed in $\scr P_{2}$, if for any $\F_0$-measurable random variable $X_0$ with $\L_{X_0}\in\scr P_{2}$, there exists a unique mild solution in $\scr P_2$.

(1) A couple $(\tilde{X}_t, \tilde{W}_t)_{t\geq 0}$ is called a weak solution to Equ. \eqref{E1}, if $\tilde{W}$
is a cylindrical Brownian motion with respect to a complete filtered probability space
$(\tilde{\Omega}, \{\tilde{\F}_t\}_{t\geq 0}, \tilde{\P})$, and \eqref{Mil} holds for $(\tilde{X}_t, \tilde{W}_t)_{t\geq 0}$ in place of $(X_t, W_t)_{t\geq 0}$. Moreover, if $\L_{\tilde{X}_t}|_{\tilde{\P}}\in\scr P_2$, the weak solution is called in $\scr P_2$.

(2) Equ.\eqref{E1} is said to have weak uniqueness in $\scr P_2$, if any two weak solutions in $\scr P_2$ of \eqref{E1} from common initial distribution are equal in law. Furthermore, we call weak well-posedness in $\scr P_{2}$ for Equ.\eqref{E1} holds, if it has a weak solution from any initial distribution and has weak uniqueness in $\scr P_2$.
\end{defn}

Some notations are listed which are necessary to state subsequent results and their proofs.
\begin{itemize}
\item
Let $L^2(\Omega\rightarrow\H;\F_0)$ be the class of all random variables $\xi$ which are $\F_0$-measurable and have finite second moment. Denote by $C([0,T];\H)$ and $C([0,T];\scr P_2)$ the spaces consisted of all continuous functions from $[0, T]$ to $\H$ and $\scr P_2$ respectively. Let $\B_b(\H)$ be the class of all bounded measurable functions on $\H$ and
$L^p([0,T];\H)$ be the space of the $\H$-valued functions defined on $[0,T]$ with finite $p$-th moment.
\item
For two Banach spaces $E_1$ and $E_2$ and $i=1, 2$, $C^i(E_1;E_2) (C^i_b(E_1;E_2))$ denotes the collection of all functions from $E_1$ to $E_2$ with continuous ( and bounded) Fr\'{e}chet's derivatives up to order $i$.
\item
For a real-valued or $\mathbb{H}$-valued function $f$ defined on $[0,T]\times\mathbb{H}$, let
\beg{equation*}
\|f\|_{T,\infty}=\sup_{t\in[0,T],x\in\mathbb{H}}|f(t,x)|.
\end{equation*}
Similarly, if $f$ is an operator-valued map defined on $[0,T]\times\mathbb{H}$, let
\beg{equation*}
\|f\|_{T,\infty}=\sup_{t\in[0,T],x\in\mathbb{H}}\|f(t,x)\|.
\end{equation*}
\item The letter $C$ with or without indices will denote an unimportant constant, whose values may change from one appearance to another.
\end{itemize}
This manuscript is organized as follows.
In Section 2, we state the main results, including existence and uniqueness of solutions, dimension-free Harnack inequality and shift Harnack inequality. Section 3 devotes to
proving the existence and uniqueness of solutions through the compact method and Zvonkin's transform. Using the coupling by change of measure, the proofs of Harnack inequality and shift Harnack inequality will be given in Section 4.

\section{Main results}
The first result is concerning to the weak existence under a more general frame, where the coefficients are only assumed to be bounded and continuous. From now on, let
$T$ stand for any fixed time.
\begin{thm}\label{ws} Assume {\bf(a1)}. If $\sup_{(x,\mu)\in\H\times \scr P}(|b_t(x,\mu)|+\|Q_t(x,\mu)\|)$ is locally bounded with respect to $t$ and $b_t, Q_t$ are continuous in $\H\times\scr P$ for each $t\geq0$. Then for any fixed $T>0$, and $\mu_0\in\scr P$, Equ. \eqref{E1} has a weak solution up to time $T$ with initial distribution $\mu_0$.
\end{thm}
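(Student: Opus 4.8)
The plan is to build a weak solution by a finite--dimensional (Galerkin) approximation, to prove that the laws of the approximations are tight on $C([0,T];\H)$ by the factorization method --- where assumption {\bf (a1)} is indispensable --- and then to extract a limit and identify it as a weak solution via the Skorohod representation and the martingale representation theorems. For Step~1, for $n\ge1$ let $\pi_n$ be the orthogonal projection onto $\mathrm{span}\{e_1,\dots,e_n\}$ and set $\H_n=\pi_n\H$, $A_n=\pi_nA$, $b^n_t(x,\mu)=\pi_nb_t(\pi_nx,\mu)$, $Q^n_t(x,\mu)=\pi_nQ_t(\pi_nx,\mu)$; each $b^n,Q^n$ is continuous on $\H_n\times\scr P$ and bounded uniformly in $n$ (locally in $t$). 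It is classical (e.g.\ by one further time--discretization freezing the measure variable on dyadic intervals, reducing to finite--dimensional SDEs with bounded continuous coefficients, for which Skorohod's weak existence theorem applies) that the finite--dimensional McKean--Vlasov equation
\[
\d X^n_t=\{A_nX^n_t+b^n_t(X^n_t,\L_{X^n_t})\}\,\d t+Q^n_t(X^n_t,\L_{X^n_t})\,\d W_t,\qquad X^n_0=\pi_nX_0,\ \ \L_{X_0}=\mu_0,
\]
has a weak solution $X^n$, equivalently one satisfying the mild formula \eqref{Mil} with $A,b,Q$ replaced by $A_n,b^n,Q^n$, and hence the associated martingale problem.

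For Step~2 (the crux), split $X^n_t=\e^{At}\pi_nX_0+D^n_t+Z^n_t$ with $D^n_t=\int_0^t\e^{A_n(t-s)}b^n_s(X^n_s,\L_{X^n_s})\,\d s$ and $Z^n_t=\int_0^t\e^{A_n(t-s)}Q^n_s(X^n_s,\L_{X^n_s})\,\d W_s$. Since $\pi_nX_0\to X_0$ a.s., the first term converges a.s.\ in $C([0,T];\H)$ to $\e^{A\cdot}X_0$, so its laws are tight and $\L_{X^n_0}\Rightarrow\mu_0$ (no moment on $\mu_0$ is needed here, nor below). For $Z^n$, fix $\alpha\in(0,\varepsilon/2)$ and use
\[
Z^n_t=\ff{\sin\pi\alpha}{\pi}\int_0^t(t-s)^{\alpha-1}\e^{A_n(t-s)}Y^n_s\,\d s,\qquad Y^n_s:=\int_0^s(s-r)^{-\alpha}\e^{A_n(s-r)}Q^n_r(X^n_r,\L_{X^n_r})\,\d W_r.
\]
Here {\bf (a1)} enters: from $\e^{-x}\le C_\theta x^{-\theta}$ with $\theta=1-\varepsilon$ and $\sum_k\lambda_k^{\varepsilon-1}<\8$ one gets $\|\e^{Au}\|_{\rm HS}^2=\sum_k\e^{-2\lambda_ku}\le Cu^{\varepsilon-1}$, hence $\|\e^{A_n(s-r)}Q^n_r\|_{\rm HS}^2\le C(s-r)^{\varepsilon-1}$ uniformly in $n$ by the bound on $Q$; Burkholder's inequality then gives $\sup_n\E\int_0^T|Y^n_s|^{2p}\,\d s<\8$ for every $p\ge1$, the integrability of the kernel $(s-r)^{\varepsilon-1-2\alpha}$ being exactly why $\alpha<\varepsilon/2$ is imposed. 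Fix $p$ with $2p\alpha>1$ and $\eta,\theta>0$ with $\eta+\theta<\alpha-\ff1{2p}$. Using $\|(-A)^\eta\e^{Au}\|\le Cu^{-\eta}$ and H\"older's inequality, $(R_\alpha f)(t):=\int_0^t(t-s)^{\alpha-1}\e^{A(t-s)}f(s)\,\d s$ (and its $A_n$--analogues, with uniform bounds) maps $L^{2p}([0,T];\H)$ boundedly into $C^\theta([0,T];\D((-A)^\eta))$; since $\lambda_k\to\8$, $(-A)^{-\eta}$ is compact, so $C^\theta([0,T];\D((-A)^\eta))\hookrightarrow C([0,T];\H)$ is compact by Arzel\`a--Ascoli. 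Combining the two displays with Chebyshev's inequality applied to $\|Y^n\|_{L^{2p}([0,T];\H)}$ shows $\{\L_{Z^n}\}_n$ is tight on $C([0,T];\H)$; the same estimates, using that $\|b^n\|_\8$ is uniformly bounded, put $\{D^n\}_n$ in a fixed relatively compact subset of $C([0,T];\H)$. Hence $\{\L_{X^n}\}_n$ is tight on $C([0,T];\H)$.

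For Step~3, by Prokhorov's theorem $\L_{X^n}\Rightarrow\mathbb Q$ along a subsequence, and by the Skorohod representation theorem there are $C([0,T];\H)$--valued $\tilde X^n\to\tilde X$ a.s.\ on some probability space with $\L_{\tilde X^n}=\L_{X^n}$, $\L_{\tilde X}=\mathbb Q$. Since $\tilde X^n$ has the same law as $X^n$ it solves the $n$--th martingale problem: for $z\in\D(A)$,
\[
\<\tilde X^n_t,z\>-\<\pi_nX_0,z\>-\int_0^t\<\tilde X^n_s,A_nz\>\,\d s-\int_0^t\<b^n_s(\tilde X^n_s,\L_{\tilde X^n_s}),z\>\,\d s
\]
is a continuous martingale with quadratic variation $\int_0^t\|(Q^n_s)^\ast(\tilde X^n_s,\L_{\tilde X^n_s})z\|^2\,\d s$. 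As $\tilde X^n_s\to\tilde X_s$ a.s.\ forces $\L_{\tilde X^n_s}\Rightarrow\L_{\tilde X_s}$, continuity of $b,Q$ in $(x,\mu)$ and their uniform bounds allow passing to the limit (bounded convergence on martingale increments tested against bounded path--functionals), so
\[
M^z_t:=\<\tilde X_t,z\>-\<X_0,z\>-\int_0^t\<\tilde X_s,Az\>\,\d s-\int_0^t\<b_s(\tilde X_s,\L_{\tilde X_s}),z\>\,\d s,\qquad z\in\D(A),
\]
defines a continuous $\H$--valued martingale $M$ with tensor quadratic variation $\int_0^t(Q_sQ_s^\ast)(\tilde X_s,\L_{\tilde X_s})\,\d s$. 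By the martingale representation theorem for Hilbert--space--valued martingales --- on a possibly enlarged probability space, since $Q_sQ_s^\ast$ is here only bounded and continuous, not invertible --- there is a cylindrical Brownian motion $\tilde W$ on $\bar\H$ with $M_t=\int_0^tQ_s(\tilde X_s,\L_{\tilde X_s})\,\d\tilde W_s$; reading the displays back and using the equivalence of the analytically weak and mild formulations gives \eqref{Mil} for $(\tilde X,\tilde W)$, with $\L_{\tilde X_0}=\lim_n\L_{\pi_nX_0}=\mu_0$.

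The decisive difficulty is Step~2, the tightness of the stochastic convolutions $\{Z^n\}$ in $C([0,T];\H)$, and this is exactly where {\bf (a1)} is used: the trace--class hypothesis $\sum_k\lambda_k^{\varepsilon-1}<\8$ yields $\|\e^{Au}\|_{\rm HS}^2\le Cu^{\varepsilon-1}$, which in turn permits a factorization exponent $\alpha\in(0,\varepsilon/2)$ for which $Y^n$ has all moments, while the compactness of $R_\alpha$ (from parabolic smoothing together with compactness of $(-A)^{-\eta}$) provides the required pre--compact sets. A secondary, more technical, point is passing to the limit in the distribution--dependent coefficients and recovering the driving noise without any non--degeneracy of $Q$; this is dealt with by working through the martingale problem and invoking the martingale representation theorem on an enlarged probability space.
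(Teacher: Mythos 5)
Your overall architecture is the same as the paper's: tightness of the approximating laws on $C([0,T];\H)$ via the factorization method --- the one place where \textbf{(a1)} is used --- followed by Skorohod representation and the martingale representation theorem to extract and identify a weak solution. Where you genuinely diverge is the choice of approximating sequence. The paper does not project onto $\H_n$ at all: it takes a single Euler time--discretization in which \emph{both} arguments of $b,Q$ are frozen, i.e.\ $X^n_t$ is defined by the mild formula with $b_s(X^n_{\eta_n(s)},\L_{X^n_{\eta_n(s)}})$, $Q_s(X^n_{\eta_n(s)},\L_{X^n_{\eta_n(s)}})$; since on each step these coefficients are $\F_{\eta_n(s)}$--measurable processes rather than functions of $X^n_s$, the mild formula defines $X^n$ explicitly --- no SDE has to be solved and no Galerkin truncation is required. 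Your scheme instead keeps the $x$--dependence alive and only freezes the measure, so your finite--dimensional McKean--Vlasov equation on $\H_n$ is itself a non--trivial object: to get it you really are invoking a second, nested Skorohod--and--tightness argument, which you are labeling ``classical'' but which, as you have set things up, repeats the structure of the main proof in miniature. That is not a gap --- it can be carried out --- but it is strictly more work than the paper's route, and you could collapse the two layers by copying the paper's frozen--arguments Euler step directly in $\H$. Your Step~2 is the same estimate as the paper's (the bound $\|\e^{Au}\|_{\rm HS}^2\le Cu^{\varepsilon-1}$ from \textbf{(a1)}), except that the paper cites the compactness of the factorization operator $G_\alpha$ from \cite[Prop.~8.4]{DZ} with $\alpha=\varepsilon/2$ and $p>2/\varepsilon$, whereas you rebuild that compactness by hand via the embedding $C^\theta([0,T];\D((-A)^\eta))\hookrightarrow C([0,T];\H)$ and take $\alpha<\varepsilon/2$; these are the same mechanism and either choice of $\alpha$ works. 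Your Step~3 tests the martingale problem against $z\in\D(A)$ in weak form, while the paper instead applies $(-A)^{-1}$ to the mild formula and invokes \cite[Lemma~3.5]{BG} to pass between mild and analytically weak forms; both are standard and both land on the martingale representation theorem on an enlarged space (correctly noted, since $Q$ need not be non--degenerate under the hypotheses of this theorem).
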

Under {\bf (a1)}-{\bf (a3)}, the existence and uniqueness of solutions to Equ.\eqref{E1},
as well as the continuity of the solutions with respect to the initial value can be derived.
\beg{thm}\label{T2.1} Assume {\bf (a1)}-{\bf (a3)}. Then the following assertions hold.
\begin{enumerate}
\item[(1)] \eqref{E1} has weak well-posedness in $\scr P_2$. Let $P_t^\ast \mu_0$ be the unique distribution of the weak solution at time $t\geq 0$ with initial distribution $\mu_0$. There exists a constant $C(T)>0$ such that
    \begin{align}\label{Pta}
    \int_{0}^T\W_2(P_t^\ast \mu_0,P_t^\ast \nu_0)^2\d t\leq C(T)\W_2( \mu_0,\nu_0)^2,\ \ \mu_0,\nu_0\in\scr P_2.
    \end{align}
\item[(2)]   The strong well-posedness in $\scr P_2$ holds for  \eqref{E1}.
    Moreover, there exists an increasing function $C:[0,\infty)\to[0,\infty)$ such that for any two solutions $X_t$ and $Y_t$ to \eqref{E1}, it holds
   \begin{align}\label{X-Y}
   \int_0^T\E|X_s-Y_s|^2\d s\leq C(T)\E|X_0-Y_0|^2, \ \ T\geq 0.
   \end{align}
\end{enumerate}
\end{thm}
For any $\mu\in\scr P_2$ and any $f\in\B_b(\H)$, define
$$
P_t f (\mu)=(P_t^\ast\mu)(f):=\int_{\H}f\d P_t^\ast\mu,\ \ t\geq0.
$$

For a measurable space $(E, \mathcal{E})$, let $\scr P(E)$ denote
 the family of all probability measures on $(E,\mathcal{E})$.
 For $\mu,\nu\in\scr P(E)$, the relative entropy $\Ent(\nu|\mu)$
 is defined by
 $$\Ent(\nu|\mu):= \beg{cases} \int (\log \ff{\d\nu}{\d\mu})\,\d\nu, \ &\text{if}\ \nu\ \text{ is\ absolutely\ continuous\ with\ respect\ to}\ \mu,\\
 \infty,\ &\text{otherwise;}\end{cases}$$
and the total variational distance $\|\mu-\nu\|_{\operatorname{TV}}$ is defined by
$$
\|\mu-\nu\|_{\operatorname{TV}} := \sup_{A\in\mathcal{E}}|\mu(A)-\nu(A)|.
$$
By Pinsker's inequality (see \cite{Pin}),
\beq\label{ETX}
\|\mu-\nu\|_{\operatorname{TV}}^2\le \ff 1 2 \Ent(\nu|\mu),\quad \mu,\nu\in \scr P(E).
\end{equation}
Next, we consider log-Harnack inequality and Harnack inequality with power for the nonlinear semigroup $P^\ast_{t}$.

\beg{thm}\label{THar}
Assume {\bf (a1)}-{\bf (a3)} and that $Q_t(x,\mu)$ does not depend on $\mu$. Then the following assertions hold.
\beg{enumerate}
\item[$(1)$]
There exists an increasing function $C:[0,\infty)\to (0,\infty)$ such that for any $T>0$, the log-Harnack inequality
\begin{align*}
P_T\log f(\nu_0)
&\leq \log P_Tf(\mu_0)+\frac{C(T)}{T\wedge1}\W_2(\mu_0,\nu_0)^2,\ \ \mu_0,\nu_0\in\scr P_2
\end{align*}
holds for strictly positive function $f\in\B_{b}(\H)$. Consequently, we have
\begin{align}\label{pke}
2\|P_T^\ast\mu_0-P_T^\ast\nu_0\|^2_{\mathrm{TV}}
\leq \Ent(P_T^\ast\mu_0|P_T^\ast\nu_0)
\leq \frac{C(T)}{T\wedge1}\W_2(\mu_0,\nu_0)^2.
\end{align}
\item[$(2)$]
If $Q_t(x,\mu)$ does not depend on $(x,\mu)$, the Harnack inequality with power $p>1$ holds for non-negative $f\in\B_{b}(\H)$ and any $T>0$, i.e.
\beq\label{2.6}
(P_{T}f(\mu_0))^p
\leq P_{T} f^{p}(\nu_0)
\left( \E\exp\left\{\frac{p}{2(p-1)^2}\Phi(T)\right\}\right)^{p-1},
\ \ \mu_0,\nu_0\in \scr P_2,
\end{equation}
where
\beq
\Phi(T)=K(T)
\left(4T\phi^2\left(|X_0-Y_0|\right)+ C(T)\W_2(\mu_0,\nu_0)^2+2\frac{|X_0-Y_0|^2}{T}\right),
\end{equation}
with $\L_{X_0}=\mu_0$ and $\L_{Y_0}=\nu_0$. Consequently, $P_T^\ast\mu_0$ is  equivalent to $P_T^\ast\nu_0$ and it holds
\begin{equation}\label{ap1}
P_T\left\{\left(\frac{\d P_T^\ast\mu_0}{\d P_T^\ast\nu_0}\right)^{\frac{1}{p-1}}\right\}(\mu_0)
\leq \E\exp\left\{\frac{p}{2(p-1)^2}\Phi(T)\right\}.
\end{equation}
\end{enumerate}
\end{thm}
The next assertion characterizes the shift Harnack inequality for $P_t^\ast$.
\begin{thm} \label{TsHar}
Assume {\bf (a1)}-{\bf (a3)}. If $Q_t(x,\mu)$ does not depend on $x$, then for any $T>0$, $\mu_0\in\scr P _2$, $y\in\H$ and non-negative $f\in\B_b(\H)$, we have
\beg{equation*}
\beg{split}
P_T\log f(\mu_0)\leq\log (P_T f(\e^{AT}y+\cdot))(\mu_0)+ K(T)\left(T\phi^2(|y|)+\frac{|y|^2}{T}\right),\ \ f\geq 1,
\end{split}
\end{equation*}
and
\beg{equation*}
\beg{split}
(P_Tf(\mu_0))^p\le
&P_T(f^p(\e^{AT}y+\cdot))(\mu_0)
\exp\bigg[\ff{p}{(p-1)}K(T) \left(T\phi^2(|y|)+\frac{|y|^2}{T}\right)\bigg].
\end{split}
\end{equation*}
\end{thm}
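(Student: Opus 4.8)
The plan is to reduce the shift Harnack inequality for the distribution dependent equation to the corresponding inequality for the distribution \emph{frozen} equation, for which the coefficients are genuinely distribution independent and the known results (via coupling by change of measure in the additive-noise case, as used for Theorem~\ref{THar}) apply. Concretely, fix $\mu_0\in\scr P_2$ and let $(X_t)_{t\in[0,T]}$ be the strong solution to \eqref{E1} with $\L_{X_0}=\mu_0$; by Theorem~\ref{T2.1} the flow of laws $\mu_t:=\L_{X_t}=P_t^\ast\mu_0$ is uniquely determined. Then $X_t$ solves the \emph{linear} (in distribution) SPDE
\beg{equation*}
\d X_t=\{AX_t+b_t(X_t,\mu_t)\}\d t+Q_t(\mu_t)\d W_t,
\end{equation*}
where I have used the hypothesis that $Q_t$ does not depend on $x$, so $Q_t(x,\mu_t)=Q_t(\mu_t)$ is a (deterministic) operator-valued function of $t$ alone once $\mu_t$ is fixed. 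Write $\bar b_t(x):=b_t(x,\mu_t)$ and $\bar Q_t:=Q_t(\mu_t)$. By \textbf{(a3)}, $\bar b_t$ is Dini continuous with the same modulus $\phi$, uniformly bounded on $[0,T]$, and by \textbf{(a2)} $\bar Q_t$ satisfies the boundedness, regularity and invertibility bounds of \cite{W} with the constant $K(T)$; since $\bar Q$ is independent of the space variable, $\nabla\bar Q\equiv0$, so all hypotheses of the shift Harnack results in \cite{W} (or of the additive-noise coupling argument already invoked in the proof of Theorem~\ref{THar}(2)) are met.

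Next I would apply the shift Harnack inequality of \cite{W} to this frozen SPDE. Let $\bar P_{s,t}$ denote its (inhomogeneous, Markov) transition semigroup, so that for $f\in\B_b(\H)$ one has $\bar P_{0,T}f(x)=\E[f(\bar X_T^x)]$ where $\bar X^x$ solves the frozen equation from $\bar X_0^x=x$. The coupling by change of measure for additive noise — shift the second component by the deterministic curve $\eta_s=\frac{s}{T}\,\e^{A(T-s)}\cdots$ interpolating the target shift, exactly as in \cite[Theorem ...]{W} / \cite{W14a} — yields, for $y\in\H$ and nonnegative $f$,
\beg{equation*}
\bar P_{0,T}\log f(x)\le\log\big(\bar P_{0,T}f(\e^{AT}y+\cdot)\big)(x)+K(T)\Big(T\phi^2(|y|)+\tfrac{|y|^2}{T}\Big),\qquad f\ge1,
\end{equation*}
and the $L^p$ version with the stated exponential constant $\frac{p}{p-1}K(T)(T\phi^2(|y|)+|y|^2/T)$. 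Here the term $T\phi^2(|y|)$ comes from controlling the drift discrepancy $\int_0^T|\bar b_s(\bar X_s+\eta_s)-\bar b_s(\bar X_s)|^2\d s$ via Dini continuity of $\bar b$, and $|y|^2/T$ from the quadratic variation of the shift together with the invertibility bound on $\bar Q_s\bar Q_s^\ast$ — the $A\eta_s$ contribution is absorbed because $\|\e^{A(T-s)}\|\le1$ and the precise interpolation kills the boundary terms.

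Finally I would transfer the inequality from $\bar P_{0,T}$ back to $P_T$ and $P_T^\ast$. Since $\bar Q_t$ and $\mu_t$ do not depend on the initial point $x$ but only on the \emph{fixed} $\mu_0$, one has $P_T^\ast\mu_0=\int_\H(\bar P_{0,T}^\ast\delta_x)\,\mu_0(\d x)$, i.e. $P_Tf(\mu_0)=(P_T^\ast\mu_0)(f)=\int_\H\bar P_{0,T}f(x)\,\mu_0(\d x)$. Applying $\int_\H\cdots\mu_0(\d x)$ to the frozen log-Harnack inequality and using Jensen's inequality for the concave function $\log$ on the right-hand side gives
\beg{equation*}
P_T\log f(\mu_0)\le\log\big(P_Tf(\e^{AT}y+\cdot)\big)(\mu_0)+K(T)\Big(T\phi^2(|y|)+\tfrac{|y|^2}{T}\Big),
\end{equation*}
which is the first claimed bound; for the $L^p$ inequality one applies Jensen with the convex function $t\mapsto t^p$ (so $(P_Tf(\mu_0))^p=(\int\bar P_{0,T}f\,\d\mu_0)^p\le\int(\bar P_{0,T}f)^p\d\mu_0$) before inserting the frozen $L^p$-Harnack estimate, yielding the stated exponential factor. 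The main obstacle I anticipate is purely bookkeeping at the level of the frozen equation: one must verify that the coupling construction of \cite{W} applies uniformly in the time-dependent (but deterministic) coefficients $\bar b_t,\bar Q_t$ and produces constants depending only on $K(T)$ and $\phi$ — not on $\mu_0$ — so that the $\mu_0$-average in the last step does not reintroduce any dependence on $\mu_0$ beyond the $|y|$-terms; since all \textbf{(a2)}–\textbf{(a3)} bounds are uniform over $\scr P$, this is exactly what makes the reduction legitimate, and there is no genuine distribution-dependent difficulty left, in contrast with the log-Harnack proof of Theorem~\ref{THar}(1) where one had to additionally control the Wasserstein distance between the two law-flows.
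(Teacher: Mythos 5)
Your plan is built on the same core idea as the paper's proof: freeze the flow $\mu_t=P_t^\ast\mu_0$, so the equation becomes a distribution-independent SPDE with additive noise $Q_t(\mu_t)$, and then run a coupling by change of measure along an interpolating curve for the shift. Where you deviate is in the packaging. You propose to first establish a \emph{pointwise} shift Harnack inequality for the frozen Markov semigroup $\bar P_{0,T}$ at each deterministic $x$, and then recover the distribution-level inequality by integrating against $\mu_0$ and invoking Jensen for $\log$ (resp.\ $t\mapsto t^p$). The paper skips that layer entirely: it runs the coupling with the \emph{random} initial condition $X_0\sim\mu_0$, sets $Y_t=X_t+\e^{At}\tfrac{t}{T}y$ with $Y_0=X_0$, performs Girsanov with $\bar\Phi(t)=b_t(X_t,\mu_t)-b_t(Y_t,\mu_t)+\e^{At}\tfrac{y}{T}$, and obtains the claimed inequalities directly from Young's and H\"older's inequalities under the transformed measure, since $\L_{Y_T}|_{\bar\Q_T}=P_T^\ast\mu_0$ and $Y_T=X_T+\e^{AT}y$. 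Both routes are correct and produce the same constants; the paper's is a bit shorter because it never has to re-assemble the pointwise estimates.

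Two small imprecisions in your write-up are worth flagging. First, you cite \cite{W} for a shift Harnack inequality; \cite{W} contains the log-Harnack inequality but, to my knowledge, not a shift Harnack inequality, so step~2 cannot be delegated to that reference. This is harmless because you also sketch the coupling that would prove it, but you should say you are re-deriving, not citing. Second, the interpolating curve you write, $\eta_s=\tfrac{s}{T}\e^{A(T-s)}\cdots$, has the wrong endpoint ($\eta_T=y$ rather than $\e^{AT}y$) and does not interact correctly with the semigroup part of the generator; the natural choice in the mild-solution framework is $\eta_s=\e^{As}\tfrac{s}{T}y$, for which $\d\eta_s=A\eta_s\,\d s+\e^{As}\tfrac{y}{T}\,\d s$, so the $A\eta_s$ term is absorbed by $AY_s$ and the residual compensator $\e^{As}\tfrac{y}{T}$ is exactly what appears in $\bar\Phi$, giving $\int_0^T|\bar\Phi(s)|^2\d s\le 2T\phi^2(|y|)+2|y|^2/T$ and hence the stated constants.
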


As an immediate result of Theorem \ref{TsHar} from \cite[Theorem 1.4.4]{Wbook}, we have
\begin{cor}
\label{densitys} Under the conditions of Theorem \ref{TsHar},
for each $y\in\H$ and $\mu_0\in\scr P_2$, $P_T^\ast\mu_0$ is equivalent to $(P_T^\ast\mu_0)(\cdot-\e^{AT}y)$. Moreover, for any $p>1$, it holds
$$
P_T\left\{\left(\frac{\d P_T^\ast\mu_0}{\d [(P_T^\ast\mu_0)(\cdot-\e^{AT}y)]}\right)
^{\frac{1}{p}}\right\}(\mu_0)
\leq\exp\bigg[\ff{1}{(p-1)} K(T)\left(T\phi^2(|y|)+\frac{|y|^2}{T}\right)\bigg].
$$
\end{cor}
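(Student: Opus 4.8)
The plan is to derive Corollary \ref{densitys} directly from the power version of the shift Harnack inequality in Theorem \ref{TsHar}, which is exactly the input required by the general mechanism of \cite[Theorem 1.4.4]{Wbook}; I outline the reduction. Fix $T>0$, $p>1$, $y\in\H$ and $\mu_0\in\scr P_2$, and abbreviate $e:=\e^{AT}y$, $\mu:=P_T^\ast\mu_0$, $\nu:=(P_T^\ast\mu_0)(\cdot-e)$, $D:=K(T)\big(T\phi^2(|y|)+|y|^2/T\big)$ and $C_p:=\ff{p}{p-1}D$. The one identity to record is that the pushforward of $\mu$ under the translation $z\mapsto e+z$ is precisely $\nu$, so that for nonnegative $f\in\B_b(\H)$ one has $P_T(f^p(\e^{AT}y+\cdot))(\mu_0)=\int_\H f^p(e+z)\,\mu(\d z)=\int_\H f^p\,\d\nu$; hence the power inequality of Theorem \ref{TsHar} becomes
\[\Big(\int_\H f\,\d\mu\Big)^p\le\e^{C_p}\int_\H f^p\,\d\nu,\qquad 0\le f\in\B_b(\H).\]
Since the constant in Theorem \ref{TsHar} depends on $y$ only through $|y|$, the same inequality holds with $e$ replaced by $-e$ and the same $C_p$.

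First I would prove the equivalence $P_T^\ast\mu_0\sim(P_T^\ast\mu_0)(\cdot-\e^{AT}y)$. Taking $f=\mathbf{1}_A$ in the displayed inequality gives $\mu(A)^p\le\e^{C_p}\nu(A)$, whence $\mu\ll\nu$; the analogue with $-e$, applied to $f=\mathbf{1}_{A-e}$, gives $\nu(A)^p\le\e^{C_p}\mu(A)$, whence $\nu\ll\mu$. Thus $\mu$ and $\nu$ are mutually absolutely continuous, which is the first assertion, and $\rho:=\d\mu/\d\nu$ is well defined.

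Next I would obtain the density estimate by the most direct choice of test function. Put $f=(\rho\wedge n)^{1/p}\in\B_b(\H)$; since $\nu$ is a probability measure and $\int_\H\rho\,\d\nu=\mu(\H)=1$, we have $\int_\H f^p\,\d\nu=\int_\H(\rho\wedge n)\,\d\nu\le 1$, so the displayed inequality yields $\big(\int_\H(\rho\wedge n)^{1/p}\,\d\mu\big)^p\le\e^{C_p}$. Letting $n\to\8$ (monotone convergence),
\[P_T\big\{\rho^{1/p}\big\}(\mu_0)=\int_\H\rho^{1/p}\,\d\mu\le\e^{C_p/p},\]
and $C_p/p=\ff{D}{p-1}=\ff{1}{p-1}K(T)\big(T\phi^2(|y|)+|y|^2/T\big)$, which is precisely the bound in the statement.

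I do not expect a genuine obstacle: the result really is a corollary. The subtlest point is conceptual rather than technical — $\H$ carries no canonical translation-invariant reference measure, which is exactly why the conclusion must be phrased through the abstract criterion \cite[Theorem 1.4.4]{Wbook} (equivalence of measures plus an integrated density bound) instead of through transition densities on $\H$. The remaining care-points are routine bookkeeping: the passage to the a priori unbounded test function $\rho^{1/p}$, handled by the truncation $\rho\wedge n$ and monotone convergence above; the elementary observation $\int_\H\rho\,\d\nu=1$ used to discard the right-hand side; and the checks $\phi^2(|-y|)=\phi^2(|y|)$ and $\e^{AT}(-y)=-\e^{AT}y$ that legitimize the substitution $y\mapsto-y$ giving the reverse absolute continuity.
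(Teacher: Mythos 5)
Your proof is correct and matches the paper's intent: the paper simply invokes \cite[Theorem 1.4.4]{Wbook} without further argument, and what you write is a transparent unpacking of exactly the mechanism that theorem encapsulates --- rewrite the power shift-Harnack inequality as $(\int f\,\d\mu)^p\le \e^{C_p}\int f^p\,\d\nu$ with $\nu=(P_T^\ast\mu_0)(\cdot-\e^{AT}y)$, test with indicators (using both $y$ and $-y$) to get mutual absolute continuity, then test with the truncation $(\rho\wedge n)^{1/p}$ and pass to the limit. All the bookkeeping (the pushforward identity, $\int\rho\,\d\nu=1$, symmetry of the constant under $y\mapsto-y$, and the exponent $C_p/p=\tfrac{1}{p-1}K(T)(T\phi^2(|y|)+|y|^2/T)$) checks out.
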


\section{Existence and Uniqueness}
In this section, we investigate the existence and uniqueness of solutions to Equ.\eqref{E1}. Firstly,  we will use the compactness method in the proof of \cite[Theorem 8.1]{DZ} to complete the proof of Theorem \ref{ws}. Next, the strong well-posedness will be shown by the fixed point theorem combined with Zvonkin's transform which depends on distribution under {\bf (a1)-(a3)}. Finally, the weak  uniqueness can be derived by the strong well-posedness.
\subsection{Proof of Theorem \ref{ws}}
For the sake of reader's convenience, let us recall a result on the compact operators introduced in \cite[Proposition 8.4]{DZ}.
\begin{lem}\label{CO}
Let $\{S(t)\}_{t>0}$ be a family of compact operators on $\H$. Then for any $p, \alpha$ satisfying $0<\frac{1}{p}<\alpha\leq1$, the operator $G_\alpha$ defined by
\begin{align}
G_\alpha f(t)=\int_0^t(t-s)^{\alpha-1}S(t-s)f(s)\d s,\ \ t\in[0,T],
\end{align}
is compact from $L^p([0,T],\H)$ into $C([0,T],\H)$.
\end{lem}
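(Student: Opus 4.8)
The plan is to present $G_\alpha$ as an operator-norm limit of compact operators obtained by cutting out the singular endpoint $s=t$ of the kernel, and then to establish compactness of each approximant via the Arzel\`a--Ascoli theorem in $C([0,T];\H)$.

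First I would record that $G_\alpha$ maps $L^p([0,T];\H)$ boundedly into $C([0,T];\H)$. Put $M:=\sup_{0<t\le T}\|S(t)\|<\infty$ and let $q=p/(p-1)$ be the conjugate exponent. Since $\frac1p<\alpha$, we have $(\alpha-1)q>-1$, hence $r\mapsto r^{\alpha-1}\in L^q([0,T])$, and H\"older's inequality gives
\[
|G_\alpha f(t)|\le M\Big(\int_0^t r^{(\alpha-1)q}\,\d r\Big)^{1/q}\|f\|_{L^p([0,T];\H)},\qquad t\in[0,T].
\]
Thus $G_\alpha:L^p([0,T];\H)\to L^\infty([0,T];\H)$ is bounded; continuity of $t\mapsto G_\alpha f(t)$ then follows, after the substitution $r=t-s$ and an approximation of $f$ in $L^p$ by continuous functions (admissible by the displayed bound), from the strong continuity of $S(\cdot)$ and dominated convergence, with dominating function $r^{\alpha-1}M\sup|f|\in L^1([0,T])$.

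Next, for $\varepsilon\in(0,T)$ I would set $G_\alpha^\varepsilon f(t)=\int_0^{(t-\varepsilon)^+}(t-s)^{\alpha-1}S(t-s)f(s)\,\d s$. Applying the H\"older estimate above to the excised portion of the integral yields
\[
\|(G_\alpha-G_\alpha^\varepsilon)f\|_{C([0,T];\H)}\le M\Big(\int_0^\varepsilon r^{(\alpha-1)q}\,\d r\Big)^{1/q}\|f\|_{L^p([0,T];\H)}.
\]
Since $\int_0^\varepsilon r^{(\alpha-1)q}\,\d r\to0$ as $\varepsilon\downarrow0$, it follows that $G_\alpha^\varepsilon\to G_\alpha$ in operator norm, and since a norm limit of compact operators is compact it suffices to prove each $G_\alpha^\varepsilon$ compact. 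Write $B$ for the closed unit ball of $L^p([0,T];\H)$. The crucial observation is that every factor $S(t-s)$ occurring in $G_\alpha^\varepsilon f(t)$ has argument $t-s\ge\varepsilon$, so that $S(t-s)=S(\varepsilon/2)S(t-s-\varepsilon/2)$; pulling the bounded operator $S(\varepsilon/2)$ out of the Bochner integral gives $G_\alpha^\varepsilon f(t)=S(\varepsilon/2)\int_0^{(t-\varepsilon)^+}(t-s)^{\alpha-1}S(t-s-\varepsilon/2)f(s)\,\d s$, where by H\"older the integral stays in a fixed bounded subset of $\H$ as $f$ ranges over $B$ and $t$ over $[0,T]$; applying the compact operator $S(\varepsilon/2)$ shows that $\{G_\alpha^\varepsilon f(t):f\in B\}$ is relatively compact in $\H$, uniformly in $t$. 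For equicontinuity of $\{G_\alpha^\varepsilon f:f\in B\}$ on $[0,T]$ one splits, for $0\le t\le t'\le T$, the difference $G_\alpha^\varepsilon f(t')-G_\alpha^\varepsilon f(t)$ into the contribution of the extra time interval $\big((t-\varepsilon)^+,(t'-\varepsilon)^+\big]$, of order $O\big((t'-t)^{1/q}\big)$ uniformly over $B$ by H\"older, and the term $\int_0^{(t-\varepsilon)^+}\big[(t'-s)^{\alpha-1}S(t'-s)-(t-s)^{\alpha-1}S(t-s)\big]f(s)\,\d s$, in which both arguments of $S$ stay in $[\varepsilon,T]$; using once more the factorization $S(r)=S(\varepsilon/2)S(r-\varepsilon/2)$ together with the strong continuity of $S(\cdot)$ and the compactness of $S(\varepsilon/2)$, one checks that $r\mapsto r^{\alpha-1}S(r)$ is uniformly continuous in operator norm on $[\varepsilon,T]$, so this term is bounded by $\omega(t'-t)\|f\|_{L^p}$ for a modulus $\omega$ with $\omega(0^+)=0$. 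Arzel\`a--Ascoli then gives that $G_\alpha^\varepsilon(B)$ is relatively compact in $C([0,T];\H)$, i.e.\ $G_\alpha^\varepsilon$ is compact, and combined with the norm approximation this completes the proof.

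The step I expect to be the main obstacle is this last equicontinuity estimate -- controlling $\int_0^{(t-\varepsilon)^+}\big[(t'-s)^{\alpha-1}S(t'-s)-(t-s)^{\alpha-1}S(t-s)\big]f(s)\,\d s$ uniformly over the $L^p$-ball. It hinges on norm-continuity of $r\mapsto S(r)$ bounded away from $r=0$; in the setting of \eqref{E1} one has $S(t)=\e^{At}$, which by {\bf (a1)} is compact (and the semigroup is analytic and immediately compact), hence norm-continuous on $(0,\infty)$, so the estimate is immediate, while in general the factorization $S(r)=S(\varepsilon/2)S(r-\varepsilon/2)$ reduces it to the strong continuity of $S(\cdot)$ composed with the single compact operator $S(\varepsilon/2)$.
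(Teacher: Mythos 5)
The paper does not supply its own proof of this lemma: it is stated purely as a recalled result, cited as Proposition~8.4 of Da Prato--Zabczyk \cite{DZ}, and used as a black box in the proof of Theorem~\ref{ws}. There is therefore no ``paper proof'' to compare yours against line by line; what follows is an assessment of your reconstruction on its own terms.

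Your argument is correct and is, as far as I can tell, the standard one (and essentially the one in \cite{DZ}): truncate the kernel near the singular endpoint to obtain $G_\alpha^\varepsilon$, show each $G_\alpha^\varepsilon$ is compact into $C([0,T];\H)$ via Arzel\`a--Ascoli, and pass to the operator-norm limit using the H\"older bound $M\big(\int_0^\varepsilon r^{(\alpha-1)q}\,\d r\big)^{1/q}\to 0$, which is exactly where the hypothesis $\alpha>1/p$ (i.e.\ $(\alpha-1)q>-1$) enters. The two Arzel\`a--Ascoli ingredients --- pointwise relative compactness via the factorization $S(t-s)=S(\varepsilon/2)S(t-s-\varepsilon/2)$ pulling the compact factor out of the Bochner integral, and equicontinuity via the split into the short extra time interval plus the bulk term controlled by operator-norm continuity of $r\mapsto S(r)$ on $[\varepsilon,T]$ --- are both handled correctly, including the subtle point that eventual compactness plus strong continuity yields norm continuity away from $0$ (one should perhaps write the factorization in equicontinuity as $[S(r'-\varepsilon/2)-S(r-\varepsilon/2)]S(\varepsilon/2)$ with the compact factor on the right so the precompactness of $S(\varepsilon/2)(B)$ converts strong to uniform convergence, but since the $S(t)$ commute this is only a cosmetic change). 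One caveat worth flagging: the lemma as transcribed in the paper says only ``a family of compact operators,'' but your proof uses the semigroup identity $S(a+b)=S(a)S(b)$ and strong continuity in an essential way. This is in fact what \cite{DZ} assumes (there $S$ is the $C_0$-semigroup $\e^{tA}$) and what holds in the application $S(t)=\e^{At}$ under \textbf{(a1)}, so your extra hypothesis is both necessary for the argument and consistent with the intended statement; it would be worth making it explicit.
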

Now, we are in the position to prove Theorem \ref{ws}.
\begin{proof}[Proof of Theorem \ref{ws}]
The proof is divided into three steps.

Step 1. For each $n\geq 1$, let $\eta_n(s)=\lfloor \frac{s}{T/n}\rfloor\frac{T}{n},$
where $\lfloor\cdot\rfloor$ stands for the integer part. Let $X_0$ be an $\F_0$-measurable random variable with $\L_{X_0}=\mu_0$.
For $t\in[0,T],$ define
\begin{align}\label{fia'}
X^n_t=\e^{At}X_0+ \int_0^t\e^{A(t-s)}b_s(X^n_{\eta_n(s)}, \L_{X^n_{\eta_n(s)}})\d s+\int_0^t\e^{A(t-s)}Q_s(X^n_{\eta_n(s)},\L_{X^n_{\eta_n(s)}})\d W_s.
\end{align}
Due to {\bf (a1)}, we have
\begin{align}\label{RHS}
\int_0^tr^{-\varepsilon}\|\e^{Ar}\|_{{\rm{HS}}}^2\d r
=\sum_{i=1}^{+\infty}\int_0^tr^{-\varepsilon}\e^{-2\lambda_i r}\d r
\leq 2^{\varepsilon-1}\Gamma(1-\varepsilon)\sum_{i=1}^{+\infty}\lambda_i^{\varepsilon-1}<\infty,
\end{align}
where $\Gamma$ stands for Gamma-function. This, together with
the condition that $b$ and $Q$ are bounded on $[0, T]$, implies that $X^n_t$ in (\ref{fia'}) is well-defined. Moreover, $X^n$ has a continuous version (see \cite[Theorem 5.9]{DZ}).

Step 2. In this step, we aim to prove $\{\L_{X^n}\}_{n\geq 1}$ is tight. Let $G_\alpha$ be as in \eqref{CO} with $\e^{At}$ in place of $S(t)$.
By (\ref{CO}), (\ref{RHS}) and stochastic Fubini theorem, we have
$$
\int_0^t\e^{A(t-s)}Q_s(X^n_{\eta_n(s)},\L_{X^n_{\eta_n(s)}})\d W_s
=\frac{\sin\frac{\varepsilon\pi}{2}}{\pi}G_{\frac{\varepsilon}{2}}Y_n(t), \ \ t\in[0,T],
$$
where
$$
Y_n(t)=\int_0^t(t-s)^{-\frac{\varepsilon}{2}}\e^{A(t-s)}Q_s(X^n_{\eta_n(s)},\L_{X^n_{\eta_n(s)}})\d W_s.
$$
Define $\tilde{G}:\mathbb{H}\to C([0,T];\H)$ as
$$[\tilde{G}(x)](t)=\e^{At}x,\ \ x\in\H,t\in[0,T].$$
It is not difficult to see that $\tilde{G}$ is a compact operator.
Then $X^n_t$ can be reformulated as
\begin{align}
X^n_t
=[\tilde{G}(X_0)](t)+G_1\left(b_\cdot(X^n_{\eta_n(\cdot)},\L_{X^n_{\eta_n(\cdot)}})\right)(t)
+\frac{\sin\frac{\varepsilon\pi}{2}}{\pi}G_{\frac{\varepsilon}{2}}Y_n(t),\ \ t\in[0,T].
\end{align}
Note that for $p>\frac{2}{\varepsilon}$ and each $n\geq1$, it is clear that
\begin{align*}
&\E\int_0^T|Y_n(t)|^p\d t
\leq C_p\int_0^T\E\left(\int_0^t(t-s)^{-\varepsilon}
\|\e^{A(t-s)}Q_s(X^n_{\eta_n(s)},\L_{X^n_{\eta_n(s)}})\|_{{\rm{HS}}}^2
\d s\right)^{\frac{p}{2}}\d t\cr
&\leq C_pT\sup\limits_{t\in[0,T]}\sup_{(x,\mu)\in\H\times \scr P}\|Q_t(x,\mu)\|^p
\left(\int_0^Tr^{-\varepsilon}\|\e^{Ar}\|_{{\rm{HS}}}^2\d r\right)^{\frac{p}{2}}
=:c_p<\infty, \ \ \forall n\geq1,
\end{align*}
where $C_p$ is a constant only depending on $p,T$ and its value can change from line to line.
Hence, we obtain
$$\P(|X_0|>r)\rightarrow0,\ \ \ r\rightarrow+\infty,$$
$$
\P\left(\int_0^T|Y_n(s)|^p\d s>r^p\right)
\leq\frac{1}{r^p}\E\int_0^T|Y_n(s)|^p\d s
\leq\frac{c_p}{r^p}\rightarrow0,\ \ \ r\rightarrow+\infty,
$$
and
\begin{align*}
&\P\left(\int_0^T|b_s(X^n_{\eta_n(s)},\L_{X^n_{\eta_n(s)}})|^p\d s>r^p\right)\leq\frac{C_p\sup\limits_{t\in[0,T]}\sup\limits_{(x,\mu)\in\H\times \scr P}|b_t(x,\mu)|^pT}{r^p}\rightarrow0,\ \ \ r\rightarrow+\infty.
\end{align*}
Therefore, for each $\delta>0$ small enough, there exists $r_\delta>0$ such that
$$
\P\left(|X_0|\leq r_\delta,\ \ \left(\int_0^T|Y_n(s)|^p\d s\right)^{\frac{1}{p}}\leq r_\delta, \ \
\left(\int_0^T|b_s(X^n_{\eta_n(s)},\L_{X^n_{\eta_n(s)}})|^p\d s\right)^{\frac{1}{p}}\leq r_\delta\right)\geq1-\delta.
$$
This leads to $\L_{X^n}(K_\delta)\geq 1-\delta, n\geq 1$, where
$$
K_\delta:=\left\{\tilde{G}x+G_1f+\frac{\sin\frac{\varepsilon\pi}{2}}{\pi}G_{\frac{\varepsilon}{2}} g: |x|\leq r_\delta, \left(\int_0^T|f(s)|^p\d s\right)^{\frac{1}{p}}\leq r_\delta, \left(\int_0^T|g(s)|^p\d s\right)^{\frac{1}{p}}\leq r_\delta \right\}
$$
is compact by Lemma \ref{CO}.  So $\{\L_{X^n}\}_{n\geq 1}$ is tight.

Step 3. Due to the tightness of $\{\L_{X^n}\}_{n\geq 1}$, there exists a weakly convergent subsequence still denoted by $\{\L_{X^n}\}_{n\geq 1}$. By the Skorohod representation theorem \cite[Theorem 2.4]{DZ}, there exists a probability space $(\tilde{\Omega},\tilde{\F}, \tilde{\P})$ and $C([0,T];\H)$-valued stochastic processes $\tilde{X}^{n}$, $\tilde{X}$ such that $\L_{X^{n}}|_\P=\L_{\tilde{X}^{n} }|_{\tilde{\P}}$, and $\tilde{\P}$-a.s. $\tilde{X}^{n}$ converges to $\tilde{X}$ as $n\to\infty$, which implies that for any $t\in[0,T]$, $\L_{\tilde{X}^{n}_t}|_{\tilde{\P}}$ weakly converges to $\L_{\tilde{X}_t}|_{\tilde{\P}}$. On the other hand, it follows from \eqref{fia'} that
\begin{align}\label{fia''}
(-A)^{-1}X^n_t=&\e ^{At}(-A)^{-1}X_0 +\int_0^t\e^{A(t-s)}(-A)^{-1}b_s(X^n_{\eta_n(s)}, \L_{X^n_{\eta_n(s)}})\d s\\ \nonumber
&+\int_0^t\e^{A(t-s)}(-A)^{-1}Q_s(X^n_{\eta_n(s)},\L_{X^n_{\eta_n(s)}})\d W_s, \ \ t\in[0,T].
\end{align}
In view of \cite[Lemma 3.5]{BG}, \eqref{fia''} implies
\begin{align}\label{fi'}
(-A)^{-1}X^n_t=&(-A)^{-1}X_0+ \int_0^t(-X^n_s)\d s+\int_0^t(-A)^{-1}b_s(X^n_{\eta_n(s)}, \L_{X^n_{\eta_n(s)}})\d s\\ \nonumber
&+\int_0^t(-A)^{-1}Q_s(X^n_{\eta_n(s)},\L_{X^n_{\eta_n(s)}})\d W_s.
\end{align}
Let
\begin{align*}
N^n_t:=(-A)^{-1}X^n_t-(-A)^{-1}X_0+ \int_0^tX^n_s\d s-\int_0^t(-A)^{-1}b_s(X^n_{\eta_n(s)}, \L_{X^n_{\eta_n(s)}})\d s, \ \ t\in[0,T],
\end{align*}
and $\tilde{N}^n$ be defined in the same way with $X^n$ replaced by $\tilde{X}^n$.
It is clear that $\{N^n_t\}_{t\in[0,T]}$ is a martingale with respect to the filtration $\F^n_t=\sigma\{X^n_s,s\leq t\}$. Thanks to $\L_{X^n}|_{\P}=\L_{\tilde{X}^n}|_{\tilde{\P}}$ and the boundedness of $Q$ and $b$, it is not difficult to prove that $\{\tilde{N}^n_t\}_{t\in[0,T]}$ is a martingale with respect to the filtration $\tilde{\F}^n_t=\sigma\{\tilde{X}^n_s,s\leq t\}$ and  the  quadratic variation process is
$$\<\tilde{N}^n\>_t=\int_0^t\left((-A)^{-1}Q_s(\tilde{X}^n_{\eta_n(s)},\L_{\tilde{X}^n_{\eta_n(s)}})\right) \left((-A)^{-1}Q_s(\tilde{X}^n_{\eta_n(s)},\L_{\tilde{X}^n_{\eta_n(s)}})\right)^\ast\d s,\ \ t\in[0,T],$$
where $\ast$ stands for the adjoint operator.
Noting that
$$|\tilde{X}^n_{\eta_n(s)}-\tilde{X}_{s}|\leq |\tilde{X}^n_{\eta_n(s)}-\tilde{X}_{\eta_n(s)}|+|\tilde{X}_{\eta_n(s)}-\tilde{X}_{s}|\leq \sup_{s\in[0,T]}|\tilde{X}^n_{s}-\tilde{X}_{s}|+|\tilde{X}_{\eta_n(s)}-\tilde{X}_{s}|,$$
we conclude that $\tilde{\P}$-a.s. $\tilde{X}^n_{\eta_n(s)}$ converges to $\tilde{X}_{s}$ as $n$ goes to infinity. This combined with the continuity of $b_t,Q_t$ implies that the process
$$\tilde{N}_t:=(-A)^{-1}\tilde{X}_t-(-A)^{-1}\tilde{X}_0+ \int_0^t\tilde{X}_s\d s-\int_0^t(-A)^{-1}b_s(\tilde{X}_{s}, \L_{\tilde{X}_{s}})\d s,\ \ t\in[0,T]$$
is a martingale with respect to the filtration $\tilde{\F}_t=\sigma\{\tilde{X}_s,s\leq t\}$ and
the quadratic variation process is
$$\<\tilde{N}\>_t=\int_0^t\left((-A)^{-1}Q_s(\tilde{X}_{s},\L_{\tilde{X}_{s}})\right) \left((-A)^{-1}Q_s(\tilde{X}_{s},\L_{\tilde{X}_{s}})\right)^\ast\d s,\ \ t\in[0,T].$$
By the martingale representation theorem \cite[Theorem 8.2]{DZ}, there exists a complete filtered probability space $(\tilde{\tilde{\Omega}},\tilde{\tilde{\F}},\{\tilde{\tilde{\F}}_t\},\tilde{\tilde{\P}})$, a cylindrical Brownian motion $\tilde{\tilde{W}}$
such that $\L_{\tilde{X}_{s}}|_{\tilde{\P}}=\L_{\tilde{X}_{s}}|_{\tilde{\tilde{\P}}}$ and
\begin{align}\label{f}
(-A)^{-1}\tilde{X}_t=&(-A)^{-1}\tilde{X}_0+ \int_0^t(-\tilde{X}_s)\d s+\int_0^t(-A)^{-1}b_s(\tilde{X}_{s}, \L_{\tilde{X}_{s}}|_{\tilde{\tilde{\P}}})\d s\\ \nonumber
&+\int_0^t(-A)^{-1}Q_s(\tilde{X}_{s}, \L_{\tilde{X}_{s}}|_{\tilde{\tilde{\P}}})\d \tilde{\tilde{W}}_s, \ \ t\in[0,T].
\end{align}
Again by \cite[Lemma 3.5]{BG}, \eqref{f} yields
\begin{align*}
(-A)^{-1}\tilde{X}_t=&\e^{At}(-A)^{-1}\tilde{X}_0+\int_0^t(-A)^{-1}\e^{A(t-s)}b_s(\tilde{X}_{s}, \L_{\tilde{X}_{s}}|_{\tilde{\tilde{\P}}})\d s\\ \nonumber
&+\int_0^t(-A)^{-1}\e^{A(t-s)}Q_s(\tilde{X}_{s}, \L_{\tilde{X}_{s}}|_{\tilde{\tilde{\P}}})\d \tilde{\tilde{W}}_s,\ \ t\in[0,T],
\end{align*}
which derives
\begin{align}\label{fig}
\tilde{X}_t&= \e ^{At}\tilde{X}_0+\int_0^t\e^{A(t-s)}b_s(\tilde{X}_{s}, \L_{\tilde{X}_{s}}|_{\tilde{\tilde{\P}}})\d s+\int_0^t\e^{A(t-s)}Q_s(\tilde{X}_{s}, \L_{\tilde{X}_{s}}|_{\tilde{\tilde{\P}}})\d \tilde{\tilde{W}}_s,\ \ t\in[0,T].
\end{align}
Thus, $(\tilde{X}_t,\tilde{\tilde{W}}_t)_{t\in[0,T]}$ is a weak solution of \eqref{E1} with initial distribution $\mu_0$.
\end{proof}
\subsection{Strong Well-posedness under {\bf (a1)-(a3)}}
In this part, the Zvonkin transform is used to obtain the strong well-posedness.
Since Ito's formula for \eqref{E1} is unavailable, we shall use finite dimensional approximation such that the It\^o's formula can be applied. To this end, for $\lambda >0$, $\mu\in C([0,T],\scr P_2)$ and $n\geq 1$, let $\H_n=\mathrm{span}\{e_1,\cdots,e_n\}$ and define
$$b^{\mu,n}_t=\pi_n b_t(\cdot,\mu_t)\circ \pi_n,\ \ Q^{\mu,n}_t=\pi_n Q_t(\cdot,\mu_t)\circ \pi_n,\ \ A_n=A\circ \pi_n.$$
Let $Z^n_{s,t}(z)$ solve
\beq\label{E-A-n}
\d Z^n_t=A_n Z^n_t\d t+Q^{\mu,n}_t(Z^n_t)\d W_t
\end{equation}
with $Z^n_{s,s}(z)=z\in\H_n$ and
$P_{s,t}^{\mu,n}$ be the associated semigroup. That is,
\begin{equation*}
P_{s,t}^{\mu,n}f(x)=\mathbb{E}f(Z^n_{s,t}(x)), \ \ x\in\H_n, f\in\B_b(\mathbb{H}_n), t\geq s\geq 0.
\end{equation*}
Consider
\beq\label{un}
u^n_s=\int_{s}^{T} \e^{-\lambda(t-s)}P_{s,t}^{\mu,n}(\nabla_{b^{\mu,n}_t}u^n_t+b^{\mu,n}_t)\d t, \ \ s\in[0,T].
\end{equation}
Due to \cite[Lemma 2.3, Proposition 2.5]{W}, we have
\beg{lem}\label{L-PDE}
Assume {\bf (a1)-(a3)}. Let $T>0$ be fixed. Then there exists a constant $\lambda_0>0$ independent of $n$ such that for any $\lambda\geq\lambda_0$, \eqref{un} has a unique solution $u^{\lambda,\mu,n}$ which belongs to $C^1([0,T]; C_{b}^{2}(\mathbb{H}_n; \mathbb{H}_n))$ with
\beq\label{g1'}
\| u^{\lambda,\mu,n}\|_{T,\infty}+\|\nabla  u^{\lambda,\mu,n}\|_{T,\infty}+\left\|\nabla^2 u^{\lambda,\mu,n}\right\|_{T,\infty}\leq \frac{1}{5},\ \ n\geq 1.
\end{equation}
\end{lem}
Let $\Theta^{\lambda,\mu,n}(x)=x+u^{\lambda,\mu,n}(x),x\in\H_n$.
Then we have the regularization of the finite dimensional approximation as follows.
\beg{lem}\label{L3.2} Assume {\bf (a1)-(a3)}. For any $T>0$, there exists a constant $\lambda(T)\geq \lambda_0$ such that for any $\zeta\in C([0,T];\scr P_2)$ and adapted continuous process $(X_t)_{t\in[0,T]}$ on $\mathbb{H}$ with $\mathbb{P}$-a.s.
\beq\label{3.3}\beg{split}
&X_t= \e^{At} X_0+\int_{0}^{t} \e^{A(t-s)}b_s(X_s,\zeta_s)\d s+\int_{0}^{t} \e^{A(t-s)}Q_s(X_s,\zeta_s)\d W_s,\ \ t\in[0,T],
\end{split}\end{equation}
and any $\lambda\geq\lambda(T), n\geq 1$, $X_t^n:=\pi_n X_t$ satisfies
\beq\label{3.4} \beg{split}
&\Theta_t^{\lambda,\mu,n}(X^n_t)= \e^{At}\Theta_0^{\lambda,\mu,n}(X^n_0)
+\int_{0}^{t} \e^{A(t-s)}\nabla \Theta_s^{\lambda,\mu,n}(X^n_s)\pi_n Q_s(X_s,\zeta_s)\d W_s\\
&+\int_{0}^{t}(\lambda-A)\e^{A(t-s)}u^{\lambda,\mu,n}_s(X^n_s)\d s\\
&+\int_0^t\e^{A(t-s)}\nabla \Theta_s^{\lambda,\mu,n}(X^n_s)\pi_n[b_s(X_s,\zeta_s)-b_s(X^n_s,\mu_s)]\d s\\
&+\frac{1}{2}\int_0^t\e^{A(t-s)}\mathrm{tr}\{[(Q_sQ^\ast_s)(X_s,\zeta_s)-(Q_sQ^\ast_s)(X^n_s,\mu_s)]\nabla ^2 u_s^{\lambda,\mu,n}(X^n_s)\}\d s,\ \ t\in[0,T],
\end{split}\end{equation}
where
\begin{align*}
&\e^{A(t-s)}\mathrm{tr}\{[(Q_sQ^\ast_s)(X_s,\zeta_s)-\<Q_sQ^\ast_s)(X^n_s,\mu_s)]\nabla ^2 u_s^{\lambda,\mu,n}(X^n_s)\}\\
&:=\sum_{i=1}^n\left( \e^{-\lambda_i(t-s)}\mathrm{tr}\{[(Q_sQ^\ast_s)(X_s,\zeta_s)-(Q_sQ^\ast_s)(X^n_s,\mu_s)]\nabla ^2 \<u_s^{\lambda,\mu,n}(X^n_s),e_i\>\}\right)e_i.
\end{align*}
\end{lem}
\begin{proof} The proof mainly follows the idea of \cite[Proposition 2.5]{W}. However, due to the distribution dependence of $b$ and $Q$, much more work needs to be done.

For simplicity, let $b_t^\mu=b_t(\cdot,\mu_t)$ and $Q_t^\mu=Q_t(\cdot,\mu_t)$. For any second-order differential function $F$ on $\H_n$, let $L_t^{\mu,n}$ be defined as
\begin{align}\label{gen}L_t^{\mu,n}F(z)&=\<Az,\nabla F(z)\>+\frac{1}{2}\sum_{i,j=1}^n\<(Q^{\mu}_t(Q^{\mu}_t)^\ast)(z) e_i,e_j\>\nabla_{e_i}\nabla_{e_j}F(z)
,\ \ z\in\H_n.
\end{align}
This together with \cite[(2.6)]{W}, dominated convergence theorem and $u^{\lambda,\mu,n}=\pi_n u^{\lambda,\mu,n}\circ\pi_n$ implies
\begin{align}\label{put}
\partial_s u^{\lambda,\mu,n}_s(z)=[(\lambda-L_s^{\mu,n}) u^{\lambda,\mu,n}_s](z)-[\nabla_{b^{\mu,n}_s} u^{\lambda,\mu,n}_s+b^{\mu,n}_s](z),\ \ z\in\H_n.
\end{align}
Since $X_s^n=\pi_nX_s$ solves the following equation
\begin{align}\label{Xn}\d X^n_s= AX^n_s\d s+\pi_nb_s(X_s,\zeta_s)\d s+\pi_nQ_s(X_s,\zeta_s)\d W_s,\ \ s\in[0,T],
\end{align}
It\^{o}'s formula, \eqref{put}, $ u^{\lambda,\mu,n}_s= \pi_n u^{\lambda,\mu,n}_s\circ \pi_n$ and $[\nabla_{b^{\mu,n}_s} u^{\lambda,\mu,n}_s]\circ\pi_n=[\nabla_{b^{\mu}_s} u^{\lambda,\mu,n}_s]\circ\pi_n$ lead to
\begin{align*}
\d  u^{\lambda,\mu,n}_s(X^n_s)
=&\<\nabla u^{\lambda,\mu,n}_s(X^n_s), Q_s(X_s,\zeta_s)\d W_s\>+\partial_s u^{\lambda,\mu,n}_s(X^n_s)\d s\\
&+\<\nabla u^{\lambda,\mu,n}_s(X^n_s),b_s(X_s,\zeta_s)\>\d s+\<AX^n_s,\nabla u^{\lambda,\mu,n}_s(X^n_s)\>\d s\\
&+\frac{1}{2}\sum_{i,j=1}^n\<(Q_sQ^\ast_s)(X_s,\zeta_s) e_i,e_j\>\nabla_{e_i}\nabla_{e_j}u^{\lambda,\mu,n}_s(X^n_s)\d s\\
=&\<\nabla u^{\lambda,\mu,n}_s(X^n_s), Q_s(X_s,\zeta_s)\d W_s\>+\lambda u^{\lambda,\mu,n}_s(X_s^n)\d s-[L_s^{\mu,n} u^{\lambda,\mu,n}_s](X_s^n)\d s\\
&-[\nabla_{b^{\mu,n}_s} u^{\lambda,\mu,n}_s+b^{\mu,n}_s](X_s^n)\d s+\<\nabla u^{\lambda,\mu,n}_s(X^n_s),b_s(X_s,\zeta_s)\>\d s\\
&+\<AX^n_s,\nabla u^{\lambda,\mu,n}_s(X^n_s)\>\d s+\frac{1}{2}\sum_{i,j=1}^n\<(Q_sQ^\ast_s)(X_s,\zeta_s) e_i,e_j\>\nabla_{e_i}\nabla_{e_j}u^{\lambda,\mu,n}_s(X^n_s)\d s\\
=&\<\nabla u^{\lambda,\mu,n}_s(X^n_s), Q_s(X_s,\zeta_s)\d W_s\>+\lambda u^{\lambda,\mu,n}_s(X^n_s)\d s\\
&+\<\nabla u^{\lambda,\mu,n}_s(X^n_s),b_s(X_s,\zeta_s)-b^\mu_s(X_s^n)\>\d s-\pi_nb^{\mu}_s(X_s^n)\d s\\
&+\frac{1}{2}\sum_{i,j=1}^n\<[(Q_sQ^\ast_s)(X_s,\zeta_s)- Q^{\mu}_s(Q^{\mu}_s)^\ast(X_s^n)] e_i,e_j\>\nabla_{e_i}\nabla_{e_j}u^{\lambda,\mu,n}_s(X_s^n)\d s.
\end{align*}
This together with \eqref{Xn} and $ u^{\lambda,\mu,n}_s= \pi_n u^{\lambda,\mu,n}_s\circ \pi_n$ yields
\begin{align*}
&\d  [u^{\lambda,\mu,n}_s(X^n_s)+X^n_s]\\
=&A[X^n_s+u^{\lambda,\mu,n}_s(X^n_s)]\d s+(\lambda-A) u^{\lambda,\mu,n}_s(X^n_s)\d s\\
&+\<\nabla u^{\lambda,\mu,n}_s(X^n_s), Q_s(X_s,\zeta_s)\d W_s\>+\pi_nQ_s(X_s,\zeta_s)\d W_s\\
&+\<\nabla u^{\lambda,\mu,n}_s(X^n_s),b_s(X_s,\zeta_s)-b^\mu_s(X_s^n)\>\d s+[\pi_nb_s(X_s,\L_{X_s})-\pi_nb^{\mu}_s(X_s^n)]\d s\\
&+\frac{1}{2}\sum_{i,j=1}^n\<[(Q_sQ^\ast_s)(X_s,\zeta_s)- (Q^{\mu}_s(Q^{\mu}_s)^\ast)(X_s^n)] e_i,e_j\>\nabla_{e_i}\nabla_{e_j}u^{\lambda,\mu,n}_s(X_s^n)\d s.
\end{align*}
Thus, we get
\begin{align}\label{fi} \nonumber&u^{\lambda,\mu,n}_t(X^n_t)+X^n_t\\ \nonumber
=&\e^{At}[u_0^{\lambda,\mu,n}(X^n_0)+X^n_0]+\int_0^t\e^{A(t-s)}(\lambda-A) u^{\lambda,\mu,n}_s(X^n_s)\d s\\ \nonumber
&+\int_0^t\e^{A(t-s)}\<\nabla u^{\lambda,\mu,n}_s(X^n_s)+I, \pi_nQ_s(X_s,\zeta_s)\d W_s\>\\
&+\int_0^t\e^{A(t-s)}\<\nabla u^{\lambda,\mu,n}_s(X^n_s)+I,\pi_nb_s(X_s,\zeta_s)-\pi_nb^\mu_s(X_s^n)\>\d s\\ \nonumber
&+\frac{1}{2}\int_0^t\e^{A(t-s)}\sum_{i,j=1}^n\<(Q_sQ^\ast_s)(X_s,\zeta_s)- (Q^{\mu}_s(Q^{\mu}_s)^\ast)(X_s^n) e_i,e_j\>\nabla_{e_i}\nabla_{e_j}u^{\lambda,\mu,n}_s(X_s^n)\d s.\nonumber
\end{align}
The proof is finished.
\end{proof}
\begin{rem}\label{mor}
The conditions in Lemma \ref{L3.2} are stronger than those required in \cite[Proposition 2.5]{W}, where {\bf(a3'')} cannot ensure the existence of $\nabla^2 u^{\lambda,\mu,n}$ in \eqref{3.4}. Moreover, different from the proof of \cite[Proposition 2.5 ]{W}, we do not take limit in \eqref{3.4} with respect to $n$ in order to avoid calculating $\lim_{n \to\infty}\nabla^2u^{\lambda,\mu,n}_s(X_s^n)$. However, it is enough to prove the strong well-posedness by \eqref{3.4}, see the proof of Theorem \ref{T2.1}(2) below for  more details.
\end{rem}
\beg{proof}[Proof of Theorem \ref{T2.1}(2)] According to \cite[Theorem 1.1]{W}, for any $\mu\in C([0,T];\scr P_2)$ and $X_0\in L^2(\Omega\rightarrow\H;\F_0)$, the following equation
\begin{align}
\label{DD}\d X_t=\{A X_t+b_t(X_t, \mu_t)\}\d t+Q_t(X_t,\mu_t)\d W_t
\end{align}
has a unique mild solution $X_t$. Let $\nu\in C([0,T];\scr P_2)$ and $Y_0\in L^2(\Omega\rightarrow\H;\F_0)$ and $Y_t$ solve \eqref{DD} with $(\mu,X_0)$ replaced by $(\nu,Y_0)$. Moreover, let $\Phi_t^{\L_{X_0}}(\mu)$ and $\Phi_t^{\L_{Y_0}}(\nu)$ be the distribution of $X_t$ and $Y_t$ respectively. Set $X_t^n=\pi_n X_t$ and $Y_t^n=\pi_n Y_t$.

Let $\lambda$ be large enough such that the assertions in Lemma \ref{L3.2} and Lemma \ref{L-PDE} hold. By \eqref{3.4}, we have $\mathbb{P}$-a.s.
\beq\label{3.6}\beg{split}
&\Theta_t^{\lambda,\mu,n}(X^n_t)-\Theta_t^{\lambda,\mu,n}(Y^n_t)\\
=&\e^{At}\left(\Theta_0^{\lambda,\mu,n}(X^n_0)-\Theta_0^{\lambda,\mu,n}(Y^n_0)\right)\\
&+\int_{0}^{t} \e^{A(t-s)}[\nabla \Theta_s^{\lambda,\mu,n}(X^n_s)\pi_n Q_s(X_s,\mu_s)-\nabla \Theta_s^{\lambda,\mu,n}(Y^n_s)\pi_n Q_s(Y_s,\nu_s)]\d W_s\\
&+\int_{0}^{t}(\lambda-A)\e^{A(t-s)}[u^{\lambda,\mu,n}_s(X^n_s)-u^{\lambda,\mu,n}_s(Y^n_s)]\d s\\
&+\int_0^t\e^{A(t-s)}\nabla \Theta_s^{\lambda,\mu,n}(X^n_s)\pi_n[b_s(X_s,\mu_s)-b_s(X^n_s,\mu_s)]\d s\\
&+\frac{1}{2}\int_0^t\e^{A(t-s)}\mathrm{tr}\{[(Q_sQ^\ast_s)(X_s,\mu_s)-(Q_sQ^\ast_s)(X^n_s,\mu_s)]\nabla ^2 u_s^{\lambda,\mu,n}(X^n_s)\}\d s\\
&-\int_0^t\e^{A(t-s)}\nabla \Theta_s^{\lambda,\mu,n}(Y^n_s)\pi_n[b_s(Y_s,\nu_s)-b_s(Y^n_s,\mu_s)]\d s\\
&-\frac{1}{2}\int_0^t\e^{A(t-s)}\mathrm{tr}\{[(Q_sQ^\ast_s)(Y_s,\nu_s)-(Q_sQ^\ast_s)(Y^n_s,\mu_s)]\nabla ^2 u_s^{\lambda,\mu,n}(Y^n_s)\}\d s,\ \ t\in[0, T].\end{split}\end{equation}
By the same argument as in \cite[(3.7)]{W} and Fatou's lemma, it is routine to obtain
\beg{equation*}
\beg{split}
&\E\liminf_{n\to\infty}\int_{0}^{l}\e^{-2\lambda t}\left|\int_{0}^{t}(\lambda-A)\e^{A(t-s)}(u^{\lambda,\mu,n}_s(X^n_s)-u^{\lambda,\mu,n}_s(Y^n_s))\d s\right|^{2}\d t\\
&\leq \liminf_{n\to\infty}\E\int_{0}^{l}\e^{-2\lambda t}\left|\int_{0}^{t}(\lambda-A)\e^{A(t-s)}(u^{\lambda,\mu,n}_s(X^n_s)-u^{\lambda,\mu,n}_s(Y^n_s))\d s\right|^{2}\d t\\
&\leq \frac{1}{4}\int_{0}^{l}\e^{-2\lambda t}\mathbb{E}|X_t-Y_t|^{2}\d t, \ \ l\in[0,T].
\end{split}\end{equation*}
Due to {\bf (a2)}, Fatou's lemma and Lemma \ref{L-PDE}, there exists some function $\varepsilon(\lambda)\downarrow 0$ as $\lambda\uparrow\infty$ such that
\beg{equation*}\begin{split}
&\mathbb{E}\liminf_{n\to\infty}\int_{0}^{l}\e^{-2\lambda t}\left|\int_{0}^{t} \e^{A(t-s)}[\nabla \Theta_s^{\lambda,\mu,n}(X^n_s)\pi_n Q_s(X_s,\mu_s)-\nabla \Theta_s^{\lambda,\mu,n}(Y^n_s)\pi_n Q_s(Y_s,\nu_s)]\d W_s\right|^{2}\d t\\
&\leq \liminf_{n\to\infty}\int_{0}^{l}\e^{-2\lambda t}\mathbb{E}\left|\int_{0}^{t} \e^{A(t-s)}[\nabla \Theta_s^{\lambda,\mu,n}(X^n_s)Q_s(X_s,\mu_s)-\nabla \Theta_s^{\lambda,\mu,n}(Y^n_s) Q_s(Y_s,\nu_s)]\d W_s\right|^{2}\d t\\
&\leq\varepsilon(\lambda)\int_{0}^{l} \e^{-2\lambda s}\mathbb{E}|X_s-Y_s|^{2}\d s+\varepsilon(\lambda)\int_{0}^{l} \e^{-2\lambda s}\W_2(\mu_s,\nu_s)^{2}\d s, \ \ l\in[0,T].
\end{split}
\end{equation*}
Furthermore, it follows from {\bf(a2)}-{\bf(a3)}, Lemma \ref{L-PDE} and dominated convergence theorem that
\begin{align*}
&\E\liminf_{n\to\infty}\int_{0}^{l}\e^{-2\lambda t}\left|\int_0^t\e^{A(t-s)}\nabla \Theta_s^{\lambda,\mu,n}(Y^n_s)\pi_n[b_s(Y_s,\nu_s)-b_s(Y^n_s,\mu_s)]\d s\right|^2\d t\\
&\leq \tilde{\varepsilon}(\lambda)\int_0^l\e^{-2\lambda s}\W_2(\mu_s,\nu_s)^{2}\d s+c\E\lim_{n\to\infty}\int_{0}^{l}\left|b_s(Y_s,\mu_s)-b_s(Y^n_s,\mu_s)\right|^2\d s\\
&=\tilde{\varepsilon}(\lambda)\int_0^l\e^{-2\lambda s}\W_2(\mu_s,\nu_s)^{2}\d s,\ \ l\in[0,T],
\end{align*}
and
\begin{align*}
&\E\liminf_{n\to\infty}\int_{0}^{l}\e^{-2\lambda t}\left|\int_0^t\e^{A(t-s)}\mathrm{tr}\{[(Q_sQ^\ast_s)(Y_s,\nu_s)-(Q_sQ^\ast_s)(Y^n_s,\mu_s)]\nabla ^2 u_s^{\lambda,\mu,n}(Y^n_s)\}\d s\right|^2\d t\\
\leq&\E\int_{0}^{l}\e^{-2\lambda t}\int_0^t\|\e^{A(t-s)}\|^2_{\rm{HS}}
\left|\mathrm{tr}[(Q_sQ^\ast_s)(Y_s,\nu_s)-(Q_sQ^\ast_s)(Y_s,\mu_s)]\right|^2\d s\d t\\
&+\E\liminf_{n\to\infty}\int_{0}^{l}\e^{-2\lambda t}\int_0^t\|\e^{A(t-s)}\|^2_{\rm{HS}}
\left|\mathrm{tr}[(Q_sQ^\ast_s)(Y_s,\mu_s)-(Q_sQ^\ast_s)(Y^n_s,\mu_s)]\right|^2\d s\d t\\
\leq& 2K(T)^2\E\int_{0}^{l}\e^{-2\lambda t}\int_0^t\|\e^{A(t-s)}\|^2_{\rm{HS}}
\left\|Q_s(Y_s,\nu_s)-Q_s(Y_s,\mu_s)\right\|^2_{\rm{HS}}\d s\d t\\
&+2K(T)^2\E\liminf_{n\to\infty}\int_{0}^{l}\e^{-2\lambda t}\E\int_0^t\|\e^{A(t-s)}\|^2_{\rm{HS}}
\left\|Q_s(Y_s,\mu_s)-Q_s(Y^n_s,\mu_s)\right\|^2_{\rm{HS}}\d s\d t\\
\leq&2K(T)^3\int_{0}^{l}\e^{-2\lambda t}\int_0^t\|\e^{A(t-s)}\|^2_{\rm{HS}}\W_2(\mu_s,\nu_s)^{2}\d s\d t\\
&+2K(T)^2\E\liminf_{n\to\infty}\int_{0}^{l}\e^{-2\lambda t}\int_0^t\|\e^{A(t-s)}\|^2_{\rm{HS}}
\left\|Q_s(Y_s,\mu_s)-Q_s(Y^n_s,\mu_s)\right\|^2_{\rm{HS}}\d s\d t\\
\leq&2K(T)^3\int_{0}^{l}\e^{-2\lambda s}\W_2(\mu_s,\nu_s)^{2}\d s\int_s^l\e^{-2\lambda (t-s)}\|\e^{A(t-s)}\|^2_{\rm{HS}}\d t\\
&+2K(T)^2\E\liminf_{n\to\infty}\int_{0}^{l}\left\|Q_s(Y_s,\mu_s)-Q_s(Y^n_s,\mu_s)\right\|^2_{\rm{HS}}\d s
\int_s^l\e^{-2\lambda (t-s)}\|\e^{A(t-s)}\|^2_{\rm{HS}}\d t\\
\leq& \tilde{\varepsilon}(\lambda)\int_{0}^{l}\e^{-2\lambda s}\W_2(\mu_s,\nu_s)^{2}\d s+c\E\lim_{n\to\infty}\int_{0}^{l}\left\|Q_s(Y_s,\mu_s)-Q_s(Y^n_s,\mu_s)\right\|^2_{\rm{HS}}\d s\\
=&\tilde{\varepsilon}(\lambda)\int_{0}^{l}\e^{-2\lambda s}\W_2(\mu_s,\nu_s)^{2}\d s, \ \ l\in[0,T]
\end{align*}
for some $\tilde{\varepsilon}(\lambda)\downarrow 0$ as $\lambda\uparrow\infty$, where we use \eqref{1.3} in the last display.
Similarly, dominated convergence theorem, {\bf(a3)}, Lemma \ref{L-PDE} and \eqref{1.3} lead to
\begin{align*}
\E\liminf_{n\to\infty}\int_{0}^{l}\e^{-2\lambda t}\left|\int_0^t\e^{A(t-s)}\nabla \Theta_s^{\lambda,\mu,n}(X^n_s)\pi_n[b_s(X_s,\mu_s)-b_s(X^n_s,\mu_s)]\d s\right|^2\d t=0,
\end{align*}
and
\begin{align*}
\E\liminf_{n\to\infty}\int_{0}^{l}\e^{-2\lambda t}\left|\int_0^t\e^{A(t-s)}\mathrm{tr}\{[(Q_sQ^\ast_s)(X_s,\mu_s)-(Q_sQ^\ast_s)(X^n_s,\mu_s)]\nabla ^2 u_s^{\lambda,\mu,n}(X^n_s)\}\d s\right|^2\d t=0.
\end{align*}
Finally, by the monotone convergence theorem and Lemma \ref{L-PDE}, we arrive at
\begin{align*}
&\E\liminf_{n\to\infty}\int_{0}^{l}\e^{-2\lambda t}\left|\Theta_t^{\lambda,\mu,n}(X^n_t)-\Theta_t^{\lambda,\mu,n}(Y^n_t)\right|^2\d t\\
&\geq\frac{16}{25}\E\lim_{n\to\infty}\int_{0}^{l}\e^{-2\lambda t}\left|X^n_t-Y^n_t\right|^2\d t=\frac{16}{25}\E\int_{0}^{l}\e^{-2\lambda t}\left|X_t-Y_t\right|^2\d t.
\end{align*}
Combining all the estimates above, for $\lambda$ large enough, we have

\begin{align}\label{ga}
\int_{0}^{l}\e^{-2\lambda s}\E|X_s-Y_s|^2\d s\leq \frac{1}{2}\int_{0}^{l}\e^{-2\lambda s}\W_2(\mu_s,\nu_s)^2\d s+c(T)\E|X_0-Y_0|^2, \ \ l\in[0,T].
\end{align}
This combined with $\W_2(\Phi_s^{\L_{X_0}}(\mu),\Phi_s^{\L_{Y_0}}(\nu))^2\leq\E|X_s-Y_s|^2 $ implies for $\lambda$ large enough, it holds
\begin{align}\label{gac}
\int_{0}^{T}\e^{-2\lambda s}\W_2(\Phi_s^{\L_{X_0}}(\mu),\Phi_s^{\L_{Y_0}}(\nu))^2\d s\leq \frac{1}{2}\int_{0}^{T}\e^{-2\lambda s}\W_2(\mu_s,\nu_s)^2\d s+c(T)\E|X_0-Y_0|^2.
\end{align}
In particular, we have
\begin{align}\label{ga'}
\int_{0}^{T}\e^{-2\lambda s}\W_2(\Phi_s^{\L_{X_0}}(\mu),\Phi_s^{\L_{X_0}}(\nu))^2\d s\leq \frac{1}{2}\int_{0}^{T}\e^{-2\lambda s}\W_2(\mu_s,\nu_s)^2\d s.
\end{align}
Consider the space
$\tt E_{T}:= \{\mu\in C([0,T]; \scr P_2):\mu_0=\L_{X_0}\}$ equipped with the complete metric
$$
\tt \rr(\nu,\mu):=\left(\int_0^T\e^{-2\lambda t}\W_2(\nu_t,\mu_t)^2\d t\right)^{\frac{1}{2}}.
$$
\eqref{ga'} yields that $\Phi^{\L_{X_0}}$ is strictly contractive in $\tt E_T$, which together with the fixed point theorem implies that there exists a unique $\mu\in\tt E_T$ such that $\Phi_s^{\L_{X_0}}(\mu)=\mu_s,\ \ s\in[0,T]$. Thus, the strong well-posedness holds.
Moreover, if $X_t$ and $Y_t$ are two solutions to \eqref{E1}, \eqref{ga} holds for $\mu_s=\L_{X_s}$ and $\nu_s=\L_{Y_s}$. Again using $\W_2(\mu_s,\nu_s)^2\leq\E|X_s-Y_s|^2 $, we  deduce \eqref{X-Y}.
\end{proof}
\subsection{Weak Uniqueness under {\bf(a1)-(a3)}}
With the strong well-posedness in hand, it is routine to derive the weak uniqueness in $\scr P_2$.
\begin{thm}\label{wu} Assume {\bf(a1)-(a3)}. Then Equ. \eqref{E1} has weak uniqueness in $\scr P_2$.
\end{thm}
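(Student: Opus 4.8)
The plan is to deduce weak uniqueness in $\scr P_2$ from the strong well-posedness already established in Theorem \ref{T2.1}(2), by a standard Yamada--Watanabe type argument adapted to the distribution dependent setting. First I would take two weak solutions $(\tilde X_t,\tilde W_t)$ on $(\tilde\Omega,\{\tilde\F_t\},\tilde\P)$ and $(\hat X_t,\hat W_t)$ on $(\hat\Omega,\{\hat\F_t\},\hat\P)$ in $\scr P_2$ of \eqref{E1}, both with common initial distribution $\mu_0$. The key observation is that the marginal law flow is what decouples the equation: set $\mu_t:=\L_{\tilde X_t}|_{\tilde\P}$ and $\nu_t:=\L_{\hat X_t}|_{\hat\P}$. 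Then $\tilde X$ is a mild solution (in the weak sense) of the \emph{distribution-free} equation \eqref{DD} with frozen measure argument $\mu_\cdot$, and likewise $\hat X$ solves \eqref{DD} with $\nu_\cdot$ in place of $\mu_\cdot$.

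Next I would invoke the well-posedness of the frozen equation \eqref{DD} coming from \cite[Theorem 1.1]{W}, as already used in the proof of Theorem \ref{T2.1}(2): for a fixed continuous measure flow, \eqref{DD} has a unique mild solution, hence (by the classical Yamada--Watanabe theorem in the distribution-free infinite dimensional setting) weak uniqueness holds for it. Consequently $\mu_t=\Phi_t^{\mu_0}(\mu)$ and $\nu_t=\Phi_t^{\mu_0}(\nu)$, i.e. both $\mu_\cdot$ and $\nu_\cdot$ are fixed points in $\tt E_T$ of the map $\Phi^{\mu_0}$ whose strict contractivity in the metric $\tt\rr$ was shown in \eqref{ga'}. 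Uniqueness of the fixed point then forces $\mu_t=\nu_t$ for all $t\in[0,T]$. With the measure flows now identified, $\tilde X$ and $\hat X$ are two weak solutions of \emph{one and the same} distribution-free equation \eqref{DD} with measure argument $\mu_\cdot=\nu_\cdot$, so weak uniqueness for \eqref{DD} yields $\L_{\tilde X}|_{\tilde\P}=\L_{\hat X}|_{\hat\P}$ on $C([0,T];\H)$; since $T$ was arbitrary, this gives weak uniqueness in $\scr P_2$ for \eqref{E1}.

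The main obstacle is the circularity in the previous paragraph: to know that $\mu_\cdot$ is a fixed point of $\Phi^{\mu_0}$ one already needs weak uniqueness of the frozen equation \eqref{DD}, and then to identify the laws of $\tilde X,\hat X$ one uses it again. So the real content is to justify that \cite[Theorem 1.1]{W} indeed provides \emph{weak} (not merely pathwise/strong) uniqueness for \eqref{DD}; this follows because pathwise uniqueness plus existence of a (weak or strong) solution implies uniqueness in law by the Yamada--Watanabe principle, which is available in this Hilbert-space mild-solution framework. A minor technical point to be careful with is that the fixed point $\mu$ of $\Phi^{\mu_0}$ constructed in Theorem \ref{T2.1}(2) lives in $C([0,T];\scr P_2)$ with $\mu_0=\L_{X_0}$, so one must first check that the marginal flow $t\mapsto\L_{\tilde X_t}|_{\tilde\P}$ of a weak solution in $\scr P_2$ is genuinely in $C([0,T];\scr P_2)$; this is routine from the mild-solution representation \eqref{Mil}, the bound \eqref{RHS} from {\bf(a1)}, and the linear growth of $b$ and $Q$, which give a uniform second-moment bound and $\W_2$-continuity in $t$. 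Once these points are in place, the argument is otherwise a direct reduction to the already-proved results.
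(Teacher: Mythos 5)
Your proof is correct, but it travels a genuinely different route than the paper's. The paper compares one weak solution $(\tilde X,\tilde W)$ directly against the strong solution $X$ on its own probability space: it freezes the coefficients along $\mu_t=\L_{X_t}|_\P$, builds a strong solution $\bar X$ of the frozen equation \eqref{E10} on $(\tilde\Omega,\tilde\P)$ driven by $\tilde W$ with $\bar X_0=\tilde X_0$, invokes weak uniqueness of the frozen equation (citing Ondrej\'{a}t's result \cite[Theorem 2]{O}) to conclude $\L_{\bar X}|_{\tilde\P}=\L_X|_\P$, observes that this makes $\bar X$ a solution of the full distribution dependent equation on $\tilde\Omega$, and then applies the \emph{pathwise} uniqueness from Theorem \ref{T2.1}(2) to conclude $\bar X=\tilde X$, hence $\L_{\tilde X}|_{\tilde\P}=\L_X|_\P$. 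You instead work symmetrically with two weak solutions, freeze the coefficients along each of their marginal flows, use weak uniqueness of the frozen equation to identify each marginal flow as a fixed point of $\Phi^{\mu_0}$, then invoke the contraction estimate \eqref{ga'} to force the two flows to coincide, and finally use weak uniqueness of the frozen equation once more to equate the path laws. The paper's argument uses Theorem \ref{T2.1}(2) as a black box (pathwise uniqueness) but has to construct the auxiliary process $\bar X$ on the auxiliary space; yours avoids that construction and is more symmetric, at the cost of reaching back into the \emph{proof} of Theorem \ref{T2.1}(2) for the contraction inequality \eqref{ga'} rather than only its statement. One small item worth making explicit if you write this up: the paper does not invoke Yamada--Watanabe abstractly but cites the specific Hilbert-space result \cite[Theorem 2]{O} for weak uniqueness of the frozen equation — you should cite something of that precision rather than appealing to ``the Yamada--Watanabe principle'' generically. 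Your remark about the $\W_2$-continuity of the marginal flow being needed (and following routinely from \eqref{Mil}, \eqref{RHS} and the boundedness/linear growth of the coefficients) is a fair point that the paper also passes over in silence.
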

\begin{proof} Let   $(X_t)_{t\ge 0}$ solve \eqref{E1} with $\scr L_{X_0}=\mu_0\in\scr P_2$, and let   $(\tt X_t,\tt W_t)$ on
$(\tt\OO, \{\tt\F_t\}_{t\ge 0}, \tt\P)$ be a weak solution in $\scr P_2$ of \eqref{E1}  such that  $\L_{X_0}|_{\P}= \L_{\tt X_0}|_{\tt\P}=\mu_0$, i.e.
$\tt X_t$ solves
\beq\label{E1'0}
\d \tt X_t = A\tt X_t\d t+b_t(\tt X_t, \L_{\tt X_t}|_{\tt\P})\d t +Q_t(\tt X_t, \L_{\tt X_t}|_{\tt\P}) \d \tt W_{t},\ \ \ \scr L_{\tt X_0}=\mu_0.\end{equation}
 We intend to   prove   $\L_{X}|_{\P}=\L_{\tt X}|_{\tt\P}$.
  Let $\mu_t= \L_{X_t}|_{\P}$ and
$$\bar b_t(x)= b_t(x, \mu_t),\ \ \bar Q_t(x)= Q_t(x, \mu_t)\ \ x\in\H.$$
According to \cite[Theorem 1.1]{W}, the SPDE
\beq\label{E10} \d \bar X_t =A\bar{X}_t\d t+ \bar b_t(\bar X_t)\d t + \bar{Q}_t(\bar X_t) \d \tt W_{t}\,\ \ \bar X_0= \tt X_0 \end{equation}
has a unique mild solution under {\bf(a1)}-{\bf(a3)}.
By \cite[Theorem 2]{O}, it also satisfies weak uniqueness. Noting that
$$\d X_t= AX_t\d t+\bar b_t(X_t)\d t + \bar{Q}_t(X_t) \d W_{t},\ \ \L_{X_0}|_{\P}= \L_{\tt X_0}|_{\tt\P},$$ the weak uniqueness of \eqref{E10} implies
\beq\label{HW} \L_{\bar X}|_{\tt\P}= \L_X|_{\P}.\end{equation}
Hence, \eqref{E10} can be rewritten as
$$ \d \bar X_t = A\bar X_t\d t+b_t(\bar X_t, \L_{\bar X_t}|_{\tt\P})\d t + Q_t(\bar X_t, \L_{\bar X_t}|_{\tt\P})\d \tt W_{t},\ \ \bar X_0=\tt X_0.$$
By the strong well-posedness in $\scr P_2$ of Equ. \eqref{E1} according to Theorem \ref{T2.1}(1),
we obtain  $\bar X=\tt X$. Therefore,  \eqref{HW} implies $\L_{\tt X}|_{\tt \P} = \L_X|_{\P}$.
\end{proof}
\begin{rem}\label{wek} If $Q_t(x,\mu)$ does not depend on $\mu$, the weak uniqueness can be ensured in the case that $b$ is not weakly continuous in the distribution variable, see \cite[Theorem 1.1(1)]{HW20} and references therein for the condition that $b$ is Lipschitz continuous in distribution variable under total variational distance. The crucial technique is Girsanov's transform, which is also available in infinite dimensional situation.
\end{rem}
\begin{proof}[Proof of Theorem \ref{T2.1}(1)] Since the strong solution is also a weak solution, the weak well-posedness can be obtained by Theorem \ref{T2.1}(2) and Theorem \ref{wu}. Moreover, \eqref{X-Y} implies \eqref{Pta}.
\end{proof}
\section{Proof of Theorem \ref{THar} and Theorem \ref{TsHar}}
The main idea of the proof is to fix the distribution in the coefficients of Equ. \eqref{E1}, which goes back to the classical situation. Then the log-Harnack inequality from different initial distribution holds according to \cite[(1.7)]{W}. Next, we calculate the relative entropy for two solutions with different distributions in the coefficients of \eqref{E1} but same initial distribution, which implies the total variational distance of these two solutions by Pinsker's inequality. Combining the above two parts, the desired log-Harnack inequality follows.  As for the Harnack inequality with power and shift Harnack inequality, the coupling by change of measure is used.

\subsection{Proof of Theorem \ref{THar}}
\begin{proof} (1) According to \cite[Theorem 1.4.2(2)]{Wbook}, \eqref{pke} follows from log-Harnack inequality and Pinsker's inequality. \eqref{ap1} is a direct conclusion of Harnack inequality with power, see \cite[Theorem 1.4.2(1)]{Wbook}. So we only need to prove log-Harnack inequality and Harnack inequality with power.

Let $\mu_t=P_t^\ast \mu_0$ and $\nu_t=P_t^\ast \nu_0$. Let $X_t$ be the solution to SPDEs
\beq\label{EC0} \d X_t= AX_t\d t+b_t(X_t,\mu_t)\d t+Q_t(X_t) \d W_t\end{equation} with
$\L_{X_0}=\mu_0$.
Define
$$
\gamma_s=Q_s^\ast(Q_sQ_s^\ast)^{-1}(X_s)[b_s(X_s,\mu_s)-b_s(X_s,\nu_s)],
\ \ \ \ \
\bar{W}_t=W_t+\int_0^t\gamma_s\d s,
$$
and
$$R_T=\exp\left\{-\int_0^T\<\gamma_s,\d W_s\>-\frac{1}{2}\int_0^T|\gamma_s|^2\d s\right\}. $$
By {\bf(a2)}-{\bf(a3)} and \eqref{Pta}, Girsanov's theorem yields that $\{\bar{W_s}\}_{s\in[0,T]}$ is a cylindrical Brownian motion under $\Q_T=R_T\P$.
Moreover, from \eqref{1.2}, {\bf(a2)} and \eqref{Pta}, it is  clear that
\begin{align}\label{RT}
\log\E R_T^2&=\log\E\exp\left\{-\int_0^T2\<\gamma_s,\d W_s\>-\int_0^T|\gamma_s|^2\d s\right\}\\ \nonumber
&\leq C(T)\int_{0}^T\W_2(\mu_s,\nu_s)^2\d s\leq C(T)\W_2(\mu_0,\nu_0)^2.
\end{align}
for some constant $C(T)>0$.
Then we have
\beq\label{ECb}
\d X_t= AX_t\d t+b_t(X_t,\nu_t)\d t+Q_t(X_t) \d \bar{W}_t.
\end{equation}
Letting $\bar{\mu}_t$ be the distribution of $X_t$ under $\Q_T$, we derive
\begin{align}
\label{mub}\bar{\mu}_T(f)=\E^{\Q_T}f(X_T)=\E(R_Tf(X_T))=\E(\E(R_T|X_T)f(X_T)),\ \ f\in\B_b(\H).
\end{align}
This implies $\P$-a.s.
\begin{align}\label{muc}\frac{\d\bar{\mu}_T}{\d\mu_T}(X_T)=\E(R_T|X_T).
\end{align}
On the other hand, according to the log-Harnack inequality in \cite[(1.7)]{W} and \cite[Theorem 1.4.2(2)]{Wbook}, there exists a constant $C>0$ such that
$$
\mathrm{Ent}(P_T^\ast\nu_0|\bar{\mu}_T)
=\bar{\mu}_T\left(\frac{\d P_T^\ast\nu_0}{\d \bar{\mu}_T}
\log\frac{\d P_T^\ast\nu_0}{\d \bar{\mu}_T}\right)\leq \frac{C}{T\wedge1}\W_2(\mu_0,\nu_0)^2.
$$
Thus, by Young's inequality, Jensen's inequality, \eqref{RT}, \eqref{mub} and \eqref{muc}, for any $f\in\B_b(\H)$,
one can arrive at
\begin{align*}
&P_T\log f(\nu_0)\\
=&\mu_T\left(\frac{\d \bar{\mu}_T}{\d \mu_T}\frac{\d P_T^\ast\nu_0}{\d \bar{\mu}_T}\log f\right)\\
\leq&\log P_Tf(\mu_0)+\mu_T\left(\frac{\d \bar{\mu}_T}{\d \mu_T}\frac{\d P_T^\ast\nu_0}{\d \bar{\mu}_T}\log\left(\frac{\d \bar{\mu}_T}{\d \mu_T}\frac{\d P_T^\ast\nu_0}{\d \bar{\mu}_T}\right)\right)\\
=&\log P_Tf(\mu_0)+\mu_T\left(\frac{\d \bar{\mu}_T}{\d \mu_T}\frac{\d P_T^\ast\nu_0}{\d \bar{\mu}_T}\log\frac{\d \bar{\mu}_T}{\d \mu_T}\right)
+\mu_T\left(\frac{\d \bar{\mu}_T}{\d \mu_T}\frac{\d P_T^\ast\nu_0}{\d \bar{\mu}_T}
\log\frac{\d P_T^\ast\nu_0}{\d \bar{\mu}_T}\right)\\
=&\log P_Tf(\mu_0)+\bar{\mu}_T\left(\frac{\d P_T^\ast\nu_0}{\d \bar{\mu}_T}
\log\frac{\d \bar{\mu}_T}{\d \mu_T}\right)
+\bar{\mu}_T\left(\frac{\d P_T^\ast\nu_0}{\d \bar{\mu}_T}
\log\frac{\d P_T^\ast\nu_0}{\d \bar{\mu}_T}\right)\\
\leq& \log P_Tf(\mu_0)+\log \bar{\mu}_T\left(\frac{\d \bar{\mu}_T}{\d \mu_T}\right)
+2\bar{\mu}_T\left(\frac{\d P_T^\ast\nu_0}{\d \bar{\mu}_T}
\log\frac{\d P_T^\ast\nu_0}{\d \bar{\mu}_T}\right)\\
\leq&\log P_Tf(\mu_0)+\log \E R_T^2
+2\bar{\mu}_T\left(\frac{\d P_T^\ast\nu_0}{\d \bar{\mu}_T}
\log\frac{\d P_T^\ast\nu_0}{\d \bar{\mu}_T}\right)\\
\leq&\log P_Tf(\mu_0)+C(T)\W_2(\mu_0,\nu_0)^2+\frac{C}{T\wedge1}\W_2(\mu_0,\nu_0)^2\\
\leq&\log P_Tf(\mu_0)+\frac{C(T)}{T\wedge1}\W_2(\mu_0,\nu_0)^2
\end{align*}
for some constant $C(T)>0$.

(2) Recall $\mu_t=P_t^\ast \mu_0$ and $\nu_t=P_t^\ast \nu_0$. Let $X_t, Y_t$ solve the equations respectively
\beq\label{EC1} \begin{split}
&\d X_t= AX_t\d t+b_t(X_t,\mu_t)\d t+Q_t \d W_t,\\
&\d Y_t= AY_t\d t+b_t(X_t,\mu_t)\d t+Q_t \d W_t+ \e^{A t}\frac{X_0-Y_0}{T}\d t
\end{split}\end{equation}
with $\L_{X_0}=\mu_0$ and
$\L_{Y_0}=\nu_0$. Then we have $Y_t=X_t+\e^{A t}\frac{(T-t)(Y_0-X_0)}{T}$.
In particular, $Y_T=X_T$.
Let
\begin{align*}
\tilde{\Phi}(t)&=b_t(X_t,\mu_t)-b_t(Y_t,\nu_t)+\e^{A t}\frac{X_0-Y_0}{T}, \ \ t\in[0,T],
\end{align*}
and $$M_s=\int_0^s\< Q_u^\ast(Q_uQ_u^\ast)^{-1}\tilde{\Phi}(u), \d
W_u\>,\ \ s\in[0,T].$$
Set
\begin{align*}
\tilde{R}(s)=\exp\left(-M_s-\frac{1}{2}\<M\>_s\right),\ \ s\in[0,T],
\end{align*}
and
$$
\tilde{W}_s=W_s+\int_0^sQ_u^\ast(Q_uQ_u^\ast)^{-1}\tilde{\Phi}(u)\d u,\ \ s\in[0,T].
$$
In addition, combining {\bf(a3)} with \eqref{Pta}, there exists a constant $C>0$ such that for any $t\in[0,T]$,
\begin{align*}
\int_0^T|\tilde{\Phi}(t)|^2\d t&\leq \int_0^T\left\{2|b_t(X_t,\mu_t)-b_t(Y_t,\nu_t)|^2+2\left|\e^{A t}\frac{X_0-Y_0}{T}\right|^2\right\}\d t\\
&\leq \int_{0}^T4\phi^2\left(\frac{T-t}{T}|X_0-Y_0|\right)\d t+\int_{0}^T4K(T)^2\W_2(\mu_t,\nu_t)^2\d t+2\frac{|X_0-Y_0|^2}{T}\\
&\leq 4T\phi^2\left(|X_0-Y_0|\right)+C(T)\W_2(\mu_0,\nu_0)^2+2\frac{|X_0-Y_0|^2}{T}.
\end{align*}
By Girsanov's theorem, $\{\tilde{W_s}\}_{s\in[0,T]}$ is a cylindrical Brownian motion under $\tilde{\Q}=\tilde{R}(T)\P$.
Then the second equation in (\ref{EC1}) can be rewritten as
\beq\label{E29}
\d Y_t= AY_t\d t+b_t(Y_t,\nu_t)\d t+Q_t\d \tilde{W}_t.
\end{equation}
Consider SPDEs
\beq\label{E2'}
\d \tilde{Y}_t=A\tilde{Y}_t\d t+b_t(\tilde{Y}_t,\L_{\tilde{Y}_t}|_{\tilde{\Q}})\d t
+Q_t\d \tilde{W}_t
\end{equation}
with $\tilde{Y}_0=Y_0$, then $\L_{Y_0}|_\P=\L_{Y_0}|_{\tilde{\Q}}=\L_{\tilde{Y}_0}|_{\tilde{\Q}}=\nu_0$.
Thus, by the weak uniqueness, $\L_{\tilde{Y}_t}|_{\tilde{\Q}}=\nu_t$, which implies $\tilde{Y}_t=Y_t$ and $\L_{Y_t}|_{\tilde{\Q}}=\nu_t$.

On the other hand,
by H\"{o}lder's inequality, for any $p>1$, it holds
\begin{align*}
P_T f(\nu_0)=\E^{\tilde{\Q}}f(Y_T)
&=\E ^{\tilde{\Q}}f(X_T)\leq (P_T f^p(\mu_0))^{\frac{1}{p}}\{\E \tilde{R}(T)^{\frac{p}{p-1}}\}^{\frac{p-1}{p}}.
\end{align*}
By the definition of $\tilde{R}(T)$ and {\bf (a2)}, one can obtain
\begin{align*}
&\E \tilde{R}(T)^{\frac{p}{p-1}}\\
&\leq \E\Bigg\{\exp\bigg[-\frac{p}{p-1}M_T-\frac{1}{2}\frac{p^2}{(p-1)^2}\<M\>_T\bigg] \times\exp\bigg[\frac{1}{2}\frac{p^2}{(p-1)^2}-\frac{1}{2}\frac{p}{p-1}\<M\>_T\bigg]\Bigg\}\\
&\leq \E\Bigg\{\E\Bigg\{\exp\bigg[-\frac{p}{p-1}M_T-\frac{1}{2}\frac{p^2}{(p-1)^2}\<M\>_T\bigg] \bigg|\F_0\Bigg\}\\
& \ \ \ \times\exp\left\{\frac{p}{2(p-1)^2}K(T)\left(4T\phi^2 \left(|X_0-Y_0|\right)+C(T)\W_2(\mu_0,\nu_0)^2+2\frac{|X_0-Y_0|^2}{T}\right)\right\}\Bigg\}\\
&\leq\E\exp\left\{\frac{p}{2(p-1)^2}K(T) \left(4T\phi^2\left(|X_0-Y_0|\right)+C(T)\W_2(\mu_0,\nu_0)^2+2\frac{|X_0-Y_0|^2}{T}\right)\right\}.
\end{align*}
Thus, we derive the Harnack inequalities.
\end{proof}
\subsection{Proof of Theorem \ref{TsHar}}
\begin{proof} Recall $\mu_t=P_t^\ast\mu_0$.
Let $X_t, Y_t$ solve the equations
\beq\label{EC1s}\begin{split}
&\d X_t= AX_t\d t+b_t(X_t,\mu_t)\d t+Q_t(\mu_t) \d W_t, \ \ \L_{X_0}=\mu_0,\\
&\d Y_t= AY_t\d t+b_t(X_t,\mu_t)\d t+Q_t(\mu_t) \d W_t+ \e^{A t}\frac{y}{T}\d t, \ \ Y_0=X_0.
\end{split}\end{equation}
Then we have $Y_t=X_t+\e^{A t}\frac{ty}{T}$.
In particular, $Y_T=X_T+\e^{AT}y$.
Let
\begin{align*}
\bar{\Phi}(t)&=b_t(X_t,\mu_t)-b_t(Y_t,\mu_t)+\e^{A t}\frac{y}{T},\ \ t\in[0,T].
\end{align*}
For any $t\in[0,T]$, set
\begin{align*}
\bar{R}(t)&=\exp\bigg[-\int_0^t\< (Q_u^\ast(Q_uQ_u^\ast)^{-1})(\mu_u)\bar{\Phi}(u), \d
W_u\>-\frac{1}{2}\int_0^t |(Q_u^\ast(Q_uQ_u^\ast)^{-1})(\mu_u)\bar{\Phi}(u)|^2\d u\bigg],
\end{align*}
and
$$
\bar W_t=W_t+\int_0^t(Q_u^\ast(Q_uQ_u^\ast)^{-1})(\mu_u)\bar{\Phi}(u)\d u.
$$
There exists a constant $C>0$ such that for any $t\in[0,T]$,
\beg{equation}\label{NN0s}
\beg{split}
|\bar{\Phi}(t)|
\leq \phi\left(\left|\e^{A t}\frac{ty}{T}\right|\right)+\left|\e^{A t}\frac{y}{T}\right|.
\end{split}\end{equation}
Thus, we have
\begin{equation}
\begin{split}\label{Phis}
\int_0^T|\bar{\Phi}(s)|^2\d s&\leq 2T\phi^2(|y|)+2\frac{|y|^2}{T}.
\end{split}\end{equation}
Girsanov's theorem implies that $\{\bar{W_s}\}_{s\in[0,T]}$ is a cylindrical Brownian motion under $\bar{\Q}_T=\bar{R}(T)\P$.
Then the second equation in (\ref{EC1s}) can be reformulated as
\beq\label{E2s}
\d Y_t= AY_t\d t+b_t(Y_t,\mu_t)\d t+Q_t(\mu_t) \d \bar{W}_t, \ \ Y_0=X_0.
\end{equation}
Thus, the distribution of $Y_T$ under the new probability $\bar{\Q}_T$ coincides with the one of $X_T$  under $\P$.

On the other hand, by Young's inequality and H\"{o}lder's inequality respectively, we arrive at
\begin{align*}
P_T \log f(\mu_0)&=\E^{\bar{\Q}_T}\log f(Y_T)\\
&=\E ^{\bar{\Q}_T}\log f(X_T+\e^{AT} y)\\
&\leq \log P_T f(\cdot+\e^{AT} y)(\mu_0)+\E \bar{R}(T)\log \bar{R}(T),
\end{align*}
and
\begin{align*}
P_T f(\mu_0)&=\E^{\bar{\Q}_T}f(Y_T)\\
&=\E ^{\bar{\Q}_T}f(X_T+\e^{AT}y)\leq (P_T f^p(\cdot+\e^{AT}y))^{\frac{1}{p}}(\mu_0)\{\E \bar{R}(T)^{\frac{p}{p-1}}\}^{\frac{p-1}{p}}.
\end{align*}
It is standard to obtain
\begin{align*}
\E \bar{R}(T)\log \bar{R}(T)=\E ^{\bar{\Q}_T}\log \bar{R}(T)=\frac{1}{2}\E^{\bar{\Q}_T} \int_0^T |(Q_u^\ast(Q_uQ_u^\ast)^{-1})(\mu_u)\bar{\Phi}(u)|^2\d u,
\end{align*}
and by the same argument as in the estimate of $\E\tilde{R}(T)^{\frac{p}{p-1}}$ in Section 4.1, it holds
\begin{align*}
\E \bar{R}(T)^{\frac{p}{p-1}}&\leq\mathrm{ess}\sup_{\Omega}\exp\left\{\frac{p}{2(p-1)^2}\int_0^T |(Q_u^\ast(Q_uQ_u^\ast)^{-1})(\mu_u)\bar{\Phi}(u)|^2\d u\right\}.
\end{align*}
Thus, the shift Harnack inequality follows from \eqref{Phis} and {\bf(a2)}.
\end{proof}

\beg{thebibliography}{99}

\bibitem{BR1} V. Barbu, M. R\"ockner, \emph{Probabilistic representation for solutions to non-linear Fokker-Planck equations,} SIAM J. Math. Anal. 50(2018), 4246-4260.

\bibitem{BR2} V. Barbu, M. R\"ockner, \emph{From non-linear Fokker-Planck equations to solutions of distribution dependent SDE,} arXiv:1808.10706.

\bibitem{BB0}M. Bauer, T. M.-Brandis, \emph{McKean-Vlasov equations on infinite-dimensional Hilbert spaces with irregular drift and additive fractional noise,} arXiv:1912.07427.

\bibitem{BB} M. Bauer, T. M-Brandis, \emph{Existence and Regularity of Solutions to Multi-Dimensional Mean-Field Stochastic Differential Equations with Irregular Drift,}	 arXiv:1912.05932.

\bibitem{BBP} M. Bauer, T. M-Brandis, F. Proske,\emph{Strong Solutions of Mean-Field Stochastic Differential Equations with irregular drift,} arXiv:1806.11451.

\bibitem{BG} Z. Brze\'{z}niak, D. Gatarek, \emph{Martingale solutions and invariant measures for stochastic evolution equations in Banach spaces,} Stoch. Proc. Appl. 84(1999), 187-225.

\bibitem{CR} P. E. Chaudru de Raynal, \emph{Strong well-posedness of McKean-Vlasov stochastic differential equation with H\"older drift, } DOI: 10.1016/j.spa.2019.01.006.

\bibitem{CMF} M. F. Chen, \emph{From Markov chain to non-equilibrium particle systems}(second edition), World Scientific, 2004.

\bibitem{CF} L. Campi, M. Fischer, \emph{$N$-player games and mean-field games with absorption,} Ann. Appl. Probab. 28(2016), 2188-2242.

\bibitem{DZ} G. Da Prato, J. Zabczyk,  \emph{Stochastic Equations in Infinite Dimensions,} Encyclopedia of Mathematics and its Applications, Cambridge University Press, 1992.

\bibitem{FS}S. Feng, \emph{Large deviation for empirical process of interacting particle system
    with unbounded jumps}, Ann. Prob. 22(1994), 2122--2151.

\bibitem{HRW} X. Huang, M. R\"{o}ckner, F.-Y. Wang, \emph{Nonlinear Fokker--Planck equations for probability measures on path space and path-distribution dependent SDEs,} Discrete Contin. Dyn. Syst. 39(2019), 3017-3035.

\bibitem{HW} X. Huang, F.-Y.  Wang,  \emph{Distribution dependent SDEs with singular coefficients,} Stoch. Proc. Appl. 129(2019), 4747-4770.

\bibitem{HW20} X. Huang, F.-Y.  Wang,  \emph{McKean-Vlasov   SDEs with Drifts  Discontinuous under  Wasserstein Distance,} arXiv:2002.06877.

\bibitem{L} W. Liu, Harnack \emph{inequality and applications for stochastic evolution equations with monotone drifts}, J. Evol. Equ. 9(2009), 747-770.

\bibitem{M} H. P. McKean, \emph{A class of Markov processes associated with nonlinear parabolic equations,} Proc Natl Acad Sci U S A 56(1966), 1907-1911.

\bibitem{MV} Yu. S. Mishura, A. Yu. Veretennikov, \emph{Existence and uniqueness theorems for solutions of McKean-Vlasov stochastic equations,} arXiv:1603.02212.

\bibitem{O} M. Ondrejet,  \emph{Uniqueness for stochastic evolution equations in Banach spaces,} Dissertationes Math. (Rozprawy Mat.) 426(2004).

\bibitem{Pin} M. S. Pinsker, \emph{ Information and Information Stability of Random Variables and Processes,} Holden-Day, San Francisco, 1964.

\bibitem{RW10} M. R\"ockner, F.-Y. Wang, \emph{Harnack and functional inequalities for generalized Mehler semigroups,}  J. Funct. Anal.  203(2007), 237-261.

\bibitem{RZ} M. R\"ockner, X. Zhang, \emph{Well-posedness of distribution dependent SDEs with singular drifts,} arXiv:1809.02216.

\bibitem{RW} M. R\"ockner, F.-Y Wang, \emph{Log-Harnack inequality for stochastic differential equations in Hilbert spaces and its consequences}, Infin. Dimenns. Anal. Quantum Probab. Relat. Top. 13(2010), 27-37.

\bibitem{V} A. A. Vlasov, \emph{The vibrational properties of an electron gas,} Soviet Physics Uspekhi 10(1968), 721.

\bibitem{W97} F.-Y. Wang, \emph{ On estimation of the logarithmic Sobolev constant and gradient
estimates of heat semigroups}. Probab. Theory Relat. Fields. 108(1997), 87-101.

\bibitem{W11} F.-Y.  Wang, \emph{ Harnack inequality for SDE with multiplicative noise and
extension to Neumann semigroup on nonconvex manifolds,} Ann. Probab. 39(2011),  1449-1467.

 \bibitem{W14a} F.-Y. Wang, \emph{Integration by parts formula and shift Harnack inequality for stochastic  equations,}   Ann. Probab. 42(2014), 994-1019.

\bibitem{Wbook} F.-Y. Wang, \emph{Harnack Inequality and Applications for Stochastic Partial Differential Equations,} Springer, New York, 2013.

\bibitem{W07} F.-Y. Wang, \emph{Harnack inequality and applications for stochastic generalized porous media equations,}  Ann. Probab. 35(2007), 1333-1350.

\bibitem{W} F.-Y. Wang,  \emph{Gradient estimate and applications for SDEs in Hilbert space with multiplicative noise and Dini continuous drift,}  J. Differential Equations 260(2016), 2792-2829.

\bibitem{FYW1} F.-Y. Wang, \emph{Distribution-dependent SDEs for Landau type equations,} Stoch. Proc. Appl. 128(2018), 595-621.

\bibitem{WZ} F.-Y. Wang, T. Zhang,  \emph{Log-Harnack inequalities for semilinear SPDE with strongly multiplicative noise,}  Stoch. Proc. Appl. 124(2014), 1261-1274.

\bibitem{AZ} A. K. Zvonkin,  \emph{A transformation of the phase space of a diffusion process that removes the drift,}  Math. Sb. 93(1974), 129-149.
\end{thebibliography}

\end{document}